\newtheorem{definition}{Definition}
\newtheorem{lemma}[definition]{Lemma}
\newtheorem{proposition}[definition]{Proposition}
\newtheorem*{main}{Theorem A}
\newtheorem*{main2}{Theorem B}
\newtheorem*{main3}{Corollary C}
\def\XXint#1#2#3{{\setbox0=\hbox{$#1{#2#3}{\int}$ }
\vcenter{\hbox{$#2#3$ }}\kern-.6\wd0}}
\begin{document}
\section*{Non-amenability and visual Gromov hyperbolic \\ spaces}

\begin{Large}
{Juhani Koivisto} \\ 
\end{Large} \\ \emph{Department of Mathematics and Computer Science, \\ University of Southern Denmark \\ Campusvej 55, DK-5230 Odense M, Denmark \\ e-mail: koivisto@imada.sdu.dk} \\

\textbf{Abstract.} We prove that a uniformly coarsely proper hyperbolic cone over a bounded metric space consisting of a finite union of uniformly coarsely connected components each containing at least two points is non-amenable and apply this to visual Gromov hyperbolic spaces. \\ \par 
\textbf{Mathematics subject classification (2000):} 53C23, 20F67 \\

\textbf{Key words:} Hyperbolic spaces, isoperimetry, amenability. \\
 
\section{Introduction}
 
A metric space $(X,d)$ is \emph{uniformly coarsely proper} if there exist $N \colon (0,\infty) \times (0,\infty) \rightarrow \mathbb{N}$ and a constant $r_b > 0$ such that for all $R > r > r_b$ every open ball of radius $R$ in $X$ can be covered by $N(R,r)$ open balls of radius $r$ in $X$. A subset $\Gamma \subseteq X$ is \emph{($\mu$-)cobounded} if there exists a constant $\mu>0$ such that $d(x,\Gamma) < \mu$ for all $x \in X$ and \emph{uniformly locally finite} if there exists $N \colon (0,\infty) \rightarrow \mathbb{N}$ such that the cardinality $\# (\Gamma \cap B(x,r)) \leq N(r)$ for all $0 < r < \infty$ and all $x \in X$. As usual $B(x,r) = \lbrace y \in X \colon d(x,y)<r\rbrace$. A \emph{quasi-lattice} in $(X,d)$ is a cobounded uniformly locally finite subset $\Gamma \subseteq X$, and $(X,d)$ is uniformly coarsely proper if and only if it has a quasi lattice \cite[Proposition 3.D.16]{CorHar}. A uniformly coarsely proper space $(X,d)$ is now said to be \emph{non-amenable} if there exist a quasi-lattice $\Gamma \subseteq X$ and constants $C>0$ and $r>0$ such that for any finite subset $F \subseteq \Gamma$ 
\begin{equation}\# F \leq C \# \partial_r F  \nonumber 
\end{equation} where $\partial_r F = \lbrace x \in \Gamma \colon d(x,F) < r \, \mathrm{and} \, d(x, \Gamma \setminus F) < r\rbrace$. \par 
A complete geodesic Gromov hyperbolic Riemannian manifold (or metric graph) with bounded local geometry and quasi-pole is non-amenable if its Gromov boundary consists of finitely many connected components of strictly positive diameter; see \cite{Cao}. We show more generally that a uniformly coarsely proper hyperbolic cone over any bounded metric space with finitely many uniformly coarsely connected components each containing at least two points is non-amenable; and hence that any uniformly coarsely proper visual Gromov hyperbolic space is non-amenable if its Gromov boundary consists of finitely many uniformly coarsely connected components of strictly positive diameter. The terminology and results are in detail as follows.  \par  
A space $(X,d)$ is \emph{Gromov hyperbolic} if it satisfies for some $\delta \in [0, \infty)$ the Gromov product inequality $$(x \vert z)_w \geq \min \lbrace (x \vert y)_w, (y \vert z)_w \rbrace - \delta$$ for all $x,y,z,w \in X$. 
The \emph{hyperbolic cone} over a bounded metric space $(Z,d)$ containing at least two points is the metric space $(\mathcal{H}(Z),\rho)$ where $\mathcal{H}(Z)=Z \times [0,\infty)$, $$\rho((x,t),(y,s)) = 2 \log \left( \dfrac{d(x,y) + \max \lbrace e^{-t},e^{-s} \rbrace D}{e^{-(s+t)/2} D} \right),$$
and $D = \mathrm{diam}(Z)$. 
A space $(X,d)$ is \emph{$\varepsilon$-coarsely connected} for $\varepsilon > 0$ if for every $x,y \in X$ there exists an \emph{$\varepsilon$-sequence} from $x$ to $y$ in $X$, by which we mean a finite sequence of points $x=x_0, \dots, x_n=y$ in $X$ such that $d(x_{i},x_{i+1}) \leq \varepsilon$ for all $0 \leq i \leq n-1$. If $(X,d)$ is $\varepsilon$-coarsely connected for all $\varepsilon >0$ we say that $(X,d)$ is \emph{uniformly coarsely connected}; a \emph{uniformly coarsely connected component} of $(X,d)$ is any subset of the form $C(x,X) = \bigcup \lbrace A \colon x \in A \subseteq X,\,A \textrm{ uniformly coarsely connected}\rbrace$. If $(X,d)$ is compact its uniformly coarsely connected components are its connected components. \par Our main result is the following coarse generalisation of \cite[Theorem 3.2]{Cao}.
\begin{main}
Let $(\mathcal{H}(Z),\rho)$ be the hyperbolic cone over a bounded space $(Z,d)$. If $(\mathcal{H}(Z), \rho)$ is uniformly coarsely proper and $(Z,d)$ consists of a finite union of uniformly coarsely connected components each containing at least two points then $(\mathcal{H}(Z), \rho)$ is non-amenable. 
\end{main}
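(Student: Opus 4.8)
The plan is to exhibit a single quasi-lattice $\Gamma$ in $\mathcal{H}(Z)$, adapted to the cone structure, for which the defining inequality can be checked directly (recall that non-amenability only asks for the existence of \emph{one} quasi-lattice satisfying it). Fix a large constant $c_0$ and, for each integer $m\ge m_0$ (a threshold fixed later), let $Z_m\subseteq Z$ be a maximal $(e^{-mc_0}D)$-separated subset, and put $\Gamma=\{(z,mc_0):m\ge m_0,\ z\in Z_m\}$. From the metric formula one reads off $\rho\bigl((x,s),(y,t)\bigr)\le s+t+2\log 2$ for $0\le s\le t$, so the bottom $Z\times[0,m_0c_0]$ is a bounded set sitting within bounded distance of the slice at height $m_0c_0$; combined with the identity $\rho\bigl((z,mc_0),(z',mc_0)\bigr)=2\log(1+d(z,z')e^{mc_0}/D)$ this shows $\Gamma$ is cobounded. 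Uniform coarse properness of $\mathcal{H}(Z)$ gives that any $\sigma$-separated subset of a $\rho$-ball of radius $R$ has cardinality bounded in terms of $\sigma,R$ only; since a $\rho$-ball of fixed radius meets only boundedly many slices $Z\times\{mc_0\}$ and, on each, corresponds via the displayed identity to a ball in $Z$ whose radius is a bounded multiple of the scale $e^{-mc_0}D$, this yields both that each $Z_m$ is finite and that $\Gamma$ is uniformly locally finite; hence $\Gamma$ is a quasi-lattice.

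Next I would introduce a parent–child relation on $\Gamma$: $(z,mc_0)$ and $(z',(m+1)c_0)$ are adjacent when $d(z,z')\le Ae^{-mc_0}D$ for a fixed $A\ge 2$. The two structural facts to establish are: (i) every node $(z,mc_0)$ has at least two children, and one may select two of them whose ``descendant balls'' $B_Z\bigl(z_i',A'e^{-(m+1)c_0}D\bigr)$ (for a comparable constant $A'$) are disjoint; (ii) along any descending chain starting at $(z,mc_0)$ the base point moves a total $Z$-distance at most $A'e^{-mc_0}D$, independently of the length of the chain. Fact (ii) is a geometric-series estimate; it implies that the set of $K$-step descendants of a node lies in one ball of $Z$-radius $\asymp e^{-mc_0}D$ and, symmetrically, that every node has at most $M$ $K$-step ancestors, where $M$ depends only on the doubling-type bound for $Z$ coming from uniform coarse properness of the cone — crucially not on $K$. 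Fact (i) is where the hypothesis on $Z$ enters: if $z$ lies in a component $C$, then $\mathrm{diam}(C)\ge\delta_0>0$, where $\delta_0$ is the least of the diameters of the finitely many components of $Z$ (each positive since each component has at least two points); since $C$ is $\varepsilon$-coarsely connected for every $\varepsilon$, an $\varepsilon$-chain in $C$ from $z$ towards a far point produces, at scale $\sigma=e^{-mc_0}D$ (with $m_0$ chosen so $\sigma$ is small relative to $\delta_0$), a point of $C$ at distance between $\sigma/3$ and $\sigma$ of $z$; replacing $z$ and this point by nearby points of $Z_{(m+1)c_0}$ gives two distinct children, and for $c_0$ large their separation exceeds $2A'e^{-(m+1)c_0}D$, making the two descendant balls disjoint.

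With (i) and (ii) the isoperimetric inequality follows by double counting. Iterating (i) — at each node selecting two children with disjoint descendant balls — every node $v$ acquires a set $D_K(v)$ of $2^K$ pairwise distinct ``nice'' $K$-step descendants, lying in $2^K$ pairwise disjoint balls of $Z$; and if $v\in F$ then every $w\in D_K(v)\setminus F$ lies within a fixed $\rho$-distance $r\asymp Kc_0$ of $F$ while lying in $\Gamma\setminus F$, so $w\in\partial_r F$. Counting the pairs $(v,w)$ with $v\in F$, $w\in D_K(v)$ in two ways, and using that every such $w$ has at most $M$ $K$-step ancestors, gives
\[
2^{K}\,\#F\;\le\;\sum_{v\in F}\#D_K(v)\;\le\;M\,\#\Bigl(\bigcup\nolimits_{v\in F}D_K(v)\Bigr)\;\le\;M\bigl(\#F+\#\partial_r F\bigr).
\]
Choosing $K$ with $2^{K}\ge 2M$ — possible precisely because $M$ is independent of $K$ — yields $\#F\le\#\partial_r F$ for every finite $F\subseteq\Gamma$, which is the asserted non-amenability.

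I expect the main obstacle to lie in Fact (i): one must extract, from the bare hypotheses ``coarsely connected at every scale'' and ``at least two points,'' a genuine two-fold branching at \emph{every} scale that survives discretisation to the nets $Z_m$, and then calibrate $c_0$ so that this branching is not absorbed by the (bounded, but a priori large) overlap inherent in $\mathcal{H}(Z)$ being only coarsely tree-like — that overlap is exactly what the ancestor bound $M$ records, and the whole argument turns on $M$ being a constant independent of $K$ while the descendant count $2^{K}$ is not. A subsidiary technical point is the passage from uniform coarse properness of the cone to doubling-type control on $Z$ at the scales $e^{-mc_0}D$, used both for the ancestor bound and for the uniform local finiteness of $\Gamma$.
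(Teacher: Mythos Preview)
Your strategy is sound and would yield a correct proof, but it is genuinely different from the paper's. The paper proceeds by building Cao's graph $\Gamma\mathcal{H}(Z)$ quasi-isometric to the cone, and then, using the intrinsic metrics $\rho_r$ together with the exponential growth estimate $\rho_{r+t}\ge\kappa^t\rho_r$ (Proposition~\ref{hypinequality}), proves that every vertex has at most $N$ downward neighbours and at least $2N$ upward neighbours (Propositions~\ref{asymmetry1} and~\ref{asymmetry2}); from this it follows that the graph Laplacian of the height function is uniformly bounded below, and an appeal to \cite[Proposition~2.3]{Cao} converts this into positivity of the Cheeger constant, hence non-amenability of the graph and, by quasi-isometry invariance, of the cone. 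The case of several components is treated separately at the end by taking a disjoint union of quasi-lattices. Your approach bypasses both the intrinsic metrics $\rho_r$ and the Laplacian/Cheeger step: you work directly with nets in $Z$ at the exponential scales $e^{-mc_0}D$, extract an embedded binary tree (your Fact~(i)) together with a uniform ancestor bound (your Fact~(ii)), and finish with an elementary double count---essentially the argument that a bounded-valence graph containing a binary tree with bounded backward branching is non-amenable. Your Facts~(i) and~(ii) are the analogues of the paper's $N^{\pm}$ estimates, but proved in $Z$ rather than via $\rho_r$; the finitely-many-components hypothesis is absorbed into Fact~(i) rather than handled as a postscript. The paper's route is more modular (it plugs directly into Cao's machinery and makes the parallel with \cite{Cao} explicit), while yours is more self-contained and avoids the external reference to \cite[Proposition~2.3]{Cao}, at the cost of the constant-calibration you already flag: arranging $c_0$, $A$, $A'$, $m_0$ so that the branching survives discretisation, and making the separation scale of $\Gamma$ exceed the threshold $r_b$ so that uniform coarse properness of the cone genuinely delivers the doubling-type bound on $Z$ you need for the ancestor count~$M$.
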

A space is visual if there exists a basepoint so that every point in the space is contained in the image of some roughly geodesic ray issuing from it; see Section \ref{metricspaces}. This gives the following generalisation of \cite[Main Theorem 1.1]{Cao}.
\begin{main2}
If $(X,d)$ is a uniformly coarsely proper visual Gromov hyperbolic space whose Gromov boundary consists of a finite union of uniformly coarsely connected components each containing at least two points then $(X,d)$ is non-amenable.
\end{main2}
\begin{proof}
As $X$ is visual Gromov hyperbolic its boundary $\partial X$ is a bounded metric space and there exists a rough-similarity $f \colon X \rightarrow \mathcal{H}(\partial X)$; see \cite[Proposition 6.2, Theorem 8.2]{BS}. 
 
Since $\partial X$ consists of finitely many uniformly coarsely connected components each containing at least two points $\mathcal{H}(\partial X)$ is non-amenable by Theorem A since uniformly coarsely proper is a quasi-isometry invariant by \cite[Corollary 3.D.17]{CorHar}. The claim now follows as non-amenability is a quasi-isometry invariant by \cite[Corollary 2.2]{ABW}.
\end{proof}

The Gromov boundary of a locally compact compactly generated hyperbolic group is compact so all of its uniformly coarsely connected components are connected; and if it consists of finitely many connected components containing at least two points, it consists of exactly one connected component containing these points; see for example \cite[Section 2.C]{Cor2}. 
\begin{main3}
Let $G$ be a locally compact compactly generated hyperbolic group whose boundary is connected and contains at least two points. Then $G$ is not geometrically amenable.
\end{main3}
\begin{proof}Suppose $G$ is compactly generated by $S \subseteq G$ and write $(G,d_S)$ for the corresponding word metric space noting that it is uniformly coarsely proper; see Lemma \ref{where}. By the characterisation of hyperbolic groups \cite[Corollary 2.6]{Mon} and the \v{S}varc-Milnor Lemma \cite[Theorem 4.C.5]{CorHar} there exists a quasi-isometry $f: (G,d_S) \rightarrow (X,d)$ where $(X,d)$ is some proper geodesic Gromov hyperbolic space. This induces a power-quasisymmetry $\partial f \colon \partial G \rightarrow \partial X$; see \cite[Theorem 6.5]{BS}. Since $\partial f$ is a homeomorphism $\partial X$ is connected and contains at least two points and $(X,d)$ is non-amenable by Theorem B. In particular $(G,d_S)$ is non-amenable. The claim now follows from \cite[Corollary 11.14]{RT}. \end{proof} 

\subsection{Organisation of the paper}
In section \ref{metricspaces}, we recall the terminology used for metric spaces not covered in the introduction and prove some folklore results claiming no originality whatsoever. Section \ref{HC} contains the gist of the paper: here we cover the hyperbolic cone construction; Cao's graph approximation; and prove Theorem A adapting techniques from Cao \cite{Cao} and V\"ah\"akangas \cite{AV}. \\   

\textbf{Acknowledgements} I would like to thank Ilkka Holopainen for his advice and for providing me with unpublished notes written by Aleksi V\"ah\"akangas in 2007 on global Sobolev inequalities on Gromov hyperbolic spaces; Jussi V\"ais\"al\"a for providing me with the letters of correspondence between him and Oded Schramm from the end of 2004 with regard to the paper Embeddings of Gromov hyperbolic spaces, and to whom Lemma \ref{vais} is attributed; Piotr Nowak for many enjoyable discussions on growth homology; Yves de Cornulier for bringing his quasi-survey \cite{Cor2} to my attention; and Pekka Pankka for several suggestions on how to improve the text. I would also like to thank the Technion for its hospitality during my stay from January to May 2014, Uri Bader and Tobias Hartnick for many stimulating conversations, and Eline Zehavi for all her help during this stay. Last, I would like to thank the Academy of Finland, projects 252293 and 271983, and ERC grant 306706, for financial support.

\section{Basic notions and folklore} \label{metricspaces}
 
A subset $N \subseteq X$ in $(X,d)$ is \emph{($\mu$-)separated} if there exists a constant $\mu > 0$ such that $d(x,y) \geq \mu$ whenever $x,y \in N$ are distinct. A \emph{maximal $\mu$-net} in $(X,d)$ is a $\mu$-separated $\mu$-cobounded subset $N \subseteq X$. Note that a maximal $\mu$-net $N \subseteq X \neq \emptyset$ always exists for any $\mu > 0$ by Zorn's lemma. \par
A function $f \colon X \rightarrow X'$ between $(X,d)$ and $(X,d')$ is a \emph{$(\lambda,\mu)$-quasi-isometric embedding} if there exist constants $\lambda \geq 1$ and $\mu \geq 0$ such that $$ \lambda^{-1} d(x,y) - \mu \leq d'(f(x),f(y)) \leq \lambda d(x,y) + \mu$$ for all $x,y \in X$, and \emph{$\mu$-essentially surjective} if $d(x',f(X)) \leq \mu$ for all $x' \in X'$. A $\mu$-essentailly surjective $(\lambda,\mu)$-quasi-isometric embedding $f \colon X \rightarrow X'$ is a \emph{$(\lambda, \mu)$-quasi-isometry} and $(X,d)$ and $(X',d')$ are said to be \emph{quasi-isometric}. 
A $(\lambda, \mu)$-quasi-isometry $f \colon X \rightarrow X'$ is \emph{a $(\lambda, \mu)$-rough similarity} if $$ \lambda d(x,y) - \mu \leq d'(f(x),f(y)) \leq \lambda d(x,y) + \mu$$ for all $x,y \in X$. \par 
Abbreviating "from $x$ to $y$" by $x \curvearrowright y$, we say that a $(1,\mu)$-quasi-isometric embedding $\gamma \colon [a,b] \rightarrow X$ from a compact interval $[a,b] \subseteq \mathbb{R}$ is a \emph{$\mu$-rough geodesic} $x \curvearrowright y$ where $x=\gamma(a)$ and $y= \gamma(b)$. A $(1,\mu)$-quasi-isometric embedding $\gamma \colon [0,\infty) \rightarrow X$ is called a \emph{$\mu$-roughly geodesic ray} issuing from $\gamma(0)$. 
A $\mu$-rough geodesic $\gamma \colon x \curvearrowright y$ can always be parametrised by $d(x,y)$. 
\begin{lemma} \label{vais}
Given a $\mu$-rough geodesic $\gamma \colon [a,b] \rightarrow X$ $x \curvearrowright y$ there exists a $2 \mu$-rough geodesic $\beta \colon [0,d(x,y)] \rightarrow X$ $x \curvearrowright y$.
\end{lemma}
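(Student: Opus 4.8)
The plan is to reparametrise $\gamma$ by a single linear rescaling of its domain onto $[0,d(x,y)]$ and then to bookkeep how badly the additive constant degrades. Write $L = d(x,y)$. If $L = 0$ then $x = y$, and the constant map $\beta \colon \{0\} \to X$, $\beta(0) = x$, is trivially a $2\mu$-rough geodesic $x \curvearrowright y$; so I would assume $L > 0$, which forces $b > a$ (otherwise $\gamma$ would be constant and $x = y$). Applying the $(1,\mu)$-quasi-isometric embedding inequality defining a $\mu$-rough geodesic to the two endpoints $a, b$ gives $|(b-a) - L| \le \mu$, so the rescaling factor $c \coloneqq (b-a)/L$ is positive and satisfies $|c - 1| \le \mu/L$.

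Next I would set $\beta \colon [0,L] \to X$, $\beta(u) = \gamma(a + cu)$; this is well defined since $a + cu$ sweeps out exactly $[a,b]$ as $u$ ranges over $[0,L]$, and $\beta(0) = \gamma(a) = x$, $\beta(L) = \gamma(b) = y$. For $u, v \in [0,L]$, the hypothesis on $\gamma$ gives $\bigl| d(\beta(u),\beta(v)) - c|u-v| \bigr| \le \mu$, while $\bigl| c|u-v| - |u-v| \bigr| = |c-1|\,|u-v| \le (\mu/L)\cdot L = \mu$ because $|u-v| \le L$. Combining these two estimates through the triangle inequality yields $\bigl| d(\beta(u),\beta(v)) - |u-v| \bigr| \le 2\mu$, i.e. $\beta$ is a $(1,2\mu)$-quasi-isometric embedding of the compact interval $[0,L] = [0,d(x,y)]$, hence a $2\mu$-rough geodesic $x \curvearrowright y$, as claimed.

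I do not expect a genuine obstacle here: the whole argument is a one-line reparametrisation together with the elementary observation that $|c-1| \le \mu/L$ bounds the distortion introduced by the rescaling. The only points that need a little care are the degenerate case $x = y$ (equivalently $L = 0$), where $c$ is undefined and one must fall back on the constant map, and keeping the arithmetic clean enough that the final constant comes out as exactly $2\mu$ rather than some larger multiple of $\mu$.
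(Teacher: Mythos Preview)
Your argument is correct. The linear rescaling $\beta(u)=\gamma(a+cu)$ with $c=(b-a)/L$ is well defined once $L>0$, and the two triangle-inequality estimates you give are exactly right: the first comes straight from the $(1,\mu)$-bound for $\gamma$, and the second uses only $|u-v|\le L$. The degenerate case $L=0$ is handled correctly by the constant map.

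However, this is \emph{not} the route taken in the paper. There the author leaves the parametrisation of $\gamma$ untouched and instead adjusts only at the end: assuming $a=0$, if $b<R=d(x,y)$ one \emph{extends} $\gamma$ to $[0,R]$ by the constant value $y$ on $[b,R]$; if $b>R$ one \emph{truncates} $\gamma$ to $[0,R)$ and redefines the endpoint value at $R$ to be $y$. In each case the key fact $|R-b|\le\mu$ is used to check the $2\mu$-bound on the few new pairs of parameters involving the modified segment. So the paper's proof is a short case analysis (``pad or cut''), whereas yours is a single uniform reparametrisation. Your version is a bit slicker and avoids cases; the paper's version has the minor feature that $\beta$ literally agrees with $\gamma$ on almost all of its domain, which is occasionally convenient but is not used anywhere later in the paper. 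Both approaches are elementary and both land on the same constant $2\mu$.
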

\begin{proof}
Write $R=d(x,y)$ and assume without loss of generality that $[a,b]=[0,b]$. First assume $b<R$. Extend $\gamma$ to $\beta \colon [0,R] \rightarrow X$ by $\beta(t)=\gamma(t)$ for $0 \leq t \leq b$ and $\beta(t)=\gamma(b)=y$ for $b \leq t \leq R$. Restricted to $0 \leq t \leq b$ the function $\beta$ is trivially a $2 \mu$-rough geodesic $x \curvearrowright y$. Next, consider the case when $0 \leq s \leq b < t \leq R$. Now, $$d(\beta(s), \beta(t)) = d(\gamma(s), \gamma(b)) \leq (b-s) + \mu \leq (t-s) + \mu.$$ On the other hand, since $\gamma$ is a $\mu$-rough geodesic $ \vert R-b \vert \leq \mu$, in particular since $t \leq R$ it follows from $R-b \leq \mu$ that $t-b \leq \mu$. As $t-b > 0$, $\vert t-b\vert \leq \mu$, and so also $$d(\beta(s), \beta(t)) \geq \vert s-b \vert - \mu = \vert s-t \vert - \vert t-b \vert - \mu \geq \vert s-t\vert - 2 \mu.$$ Finally, if $b \leq s,t \leq R$, again since $R-b \leq \mu$ it follows that $0 \leq s-b \leq \mu$ and $0 \leq t-b \leq \mu$. In particular, $\vert t-s\vert \leq \vert t-b\vert+ \vert b-s\vert \leq 2 \mu$ and we conclude that $\beta$ is a $2 \mu$-rough geodesic. \par 
Next, assume $R < b$. This time define $\beta \colon [0,R] \rightarrow X$ by $\beta(t)= \gamma(t)$ for $0 \leq t < R$, and $\beta(R)= \gamma(b)=y$. We claim that $\beta$ is a $2 \mu$-rough geodesic $x \curvearrowright y$. Clearly $\beta \colon x \curvearrowright y$, and since $\vert R-b\vert \leq \mu$ whenever $t < R$, $$d(\beta(t), \beta(R)) \leq \vert t-b \vert + \mu \leq \vert t-R\vert + \vert R-b\vert+ \mu \leq \vert t-R\vert + 2\mu,$$ and similarly, $$d(\beta(t), \beta(R)) \geq \vert t-b\vert - \mu \geq \vert t-R\vert - \vert R-b\vert- \mu \geq \vert t-R\vert - 2 \mu, $$ so $\beta$ is a $2 \mu$-rough geodesic as claimed.
\end{proof}

A space $(X,d)$ is \emph{($\mu$-)roughly geodesic} if for every $x,y \in X$ there exists a $\mu$-rough geodesic $\gamma \colon [0,d(x,y)] \rightarrow X$ $x \curvearrowright y$, and \emph{($\mu$-)visual} if there exists $o \in X$ such that every point in $X$ is contained in the image of a $\mu$-roughly geodesic ray issuing from $o$. \par

We end this section with a few clarifying remarks. A space $(X,d)$ has \emph{bounded growth at some scale} if there exist constants $R>r>0$ and $N \in \mathbb{N}$ such that any open ball of radius $R$ in $X$ can be covered by $N$ open balls of radius $r$ in $X$; see \cite{BS}. This is used by Cao in the context of geodesic spaces in \cite{Cao} and we note the following.

\begin{lemma} \label{improvement}
If $(X,d)$ is a length space then it is uniformly coarsely proper if and only if it has bounded growth at some scale.
\end{lemma}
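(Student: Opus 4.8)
The plan is to establish the two implications separately; the length-space hypothesis is needed only in one of them. For the direction ``uniformly coarsely proper $\Rightarrow$ bounded growth at some scale'', which in fact holds for an arbitrary metric space, I would just unwind the definitions: if $(X,d)$ is uniformly coarsely proper with threshold $r_b$ and covering function $N(\cdot,\cdot)$, fix any pair with $r_b < r < R$ (say $r=r_b+1$, $R=r_b+2$); then by definition every ball of radius $R$ is covered by $N(R,r)$ balls of radius $r$, and since $R>r>0$ this is precisely bounded growth at the scale $(R,r)$.

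For the converse, assume $(X,d)$ is a length space and that there are $R_0>r_0>0$ and $N_0\in\mathbb{N}$ with every ball of radius $R_0$ covered by $N_0$ balls of radius $r_0$. The one place the length-space axiom enters is the elementary inclusion: for all $a,b>0$, $B(x,a+b)\subseteq\bigcup_{y\in B(x,a)}B(y,b)$. Indeed, given $z$ with $d(x,z)<a+b$, pick a path from $x$ to $z$ of length $L<a+b$; if $L\ge b$ let $y$ be the point reached at arc-length a hair past $L-b$, so that $d(x,y)<a$ (using $L-b<a$) and $d(y,z)<b$, while if $L<b$ take $y=x$. Setting $\varepsilon:=R_0-r_0>0$, I would then prove by induction on $k\ge 0$ that every ball of radius $R_0+k\varepsilon$ is covered by $N_0^{\,k+1}$ balls of radius $r_0$: the case $k=0$ is the hypothesis, and for the step one writes $B(x,R_0+(k{+}1)\varepsilon)\subseteq\bigcup_{y\in B(x,R_0+k\varepsilon)}B(y,\varepsilon)$, covers $B(x,R_0+k\varepsilon)$ by $N_0^{\,k+1}$ balls $B(w_l,r_0)$, and notes that every point of $B(x,R_0+(k{+}1)\varepsilon)$ lies within $r_0+\varepsilon=R_0$ of some $w_l$; re-covering each $B(w_l,R_0)$ by $N_0$ balls of radius $r_0$ yields the bound $N_0^{\,k+2}$. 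Hence for every $R>0$ the ball $B(x,R)$ is covered by at most $N_0^{\lceil(R-R_0)/\varepsilon\rceil+1}$ balls of radius $r_0$ (and by $N_0$ when $R\le R_0$).

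To finish, put $r_b:=r_0$. For any $R>r>r_b$, a ball of radius $r_0$ is contained in the concentric ball of radius $r$, so the covering of $B(x,R)$ by boundedly many $r_0$-balls just produced is also a covering by that many $r$-balls; reading off $N(R,r)$ from the estimate above gives uniform coarse properness. The only real content here is the inductive covering estimate, and the trick worth isolating is the choice of increment $\varepsilon=R_0-r_0$, which is exactly what makes $r_0+\varepsilon$ land back on $R_0$ so that the hypothesis can be reapplied after each enlargement. I expect the main (though still routine) nuisance to be the strict-inequality bookkeeping in the length-space inclusion and the decision to phrase everything with open balls; no properness beyond what is assumed is needed anywhere.
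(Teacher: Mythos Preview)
Your argument is correct and is essentially the paper's own proof: both use the length-space inclusion to show that a cover of $B(x,R_0)$ by $N_0$ balls of radius $r_0$ yields a cover of $B(x,R_0+k(R_0-r_0))$ by $N_0^{\,k+1}$ balls of radius $r_0$, via the same increment $\varepsilon=R_0-r_0$ and the same re-covering step $r_0+\varepsilon=R_0$. The only difference is cosmetic---you are slightly more explicit about the endgame (setting $r_b=r_0$ and enlarging the $r_0$-balls to $r$-balls), which the paper leaves to the reader.
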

\begin{proof}
If $(X,d)$ is uniformly coarsely proper it has bounded growth at some scale. So suppose $(X,d)$ has bounded growth at some scale $R>r>0$ and cover $B(x,R)$ by $N$ open balls $B(x_1,r), \dots, B(x_i,r), \dots, B(x_N,r)$. Since $(X,d)$ is a length space, for each $y \in B(x,2R-r)$ there exists $y' \in B(x,R)$ such that $d(y,y') \leq R-r$. Thus, for any $y \in B(x,2R-r)$ we can find $y' \in B(x,R)$ and $x_i$ as above such that $$d(x_i, y) \leq d(x_i,y') + d(y',y) \leq r + R - r = R.$$ In other words, $B(x_1,R), \dots, B(x_N,R)$ cover $B(x,2R-r)$, and it follows that $B(x,2R-r)$ can be covered by $N^2$ balls of radius $r$. By induction, for any $n \in \mathbb{N}$, the ball $B(x,(n+1)R-nr)$ can be covered by $N^{n+1}$ open balls of radius $r$. 
\end{proof}

Being uniformly coarsely proper is an invariant under metric coarse equivalence by \cite[Corollary 3.D.17]{CorHar}. For the readers convenience, we give a short proof for quasi-isometries proving an explicit estimate for the scale as well.

\begin{lemma} \label{growthlargescale}
Suppose $f \colon X \rightarrow X'$ is a $(\lambda, \mu)$-quasi-isometry between $(X,d)$ and $(X',d')$. If $(X,d)$ is uniformly coarsely proper for $R>r>r_{b}$ then $(X',d')$ is uniformly coarsely proper for $R' >r'> \lambda \mu + \mu + r_{b} \lambda$. 
\end{lemma}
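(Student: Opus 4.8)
The plan is to transport a bounded-geometry cover through the quasi-isometry. Fix $R'>r'>\lambda\mu+\mu+r_{b}\lambda$ and an open ball $B'(x',R')\subseteq X'$; we must exhibit a number $N'(R',r')$ of open $r'$-balls covering it. First I would pass to a scale in $X$: by essential surjectivity choose $x\in X$ with $d'(f(x),x')\leq\mu$, and for each $z'\in B'(x',R')$ choose $y=y(z')\in X$ with $d'(z',f(y))\leq\mu$ (enlarging $\mu$ by an arbitrarily small amount if the distance to $f(X)$ is not attained, which is harmless). The triangle inequality then gives $d'(f(x),f(y))<R'+2\mu$, and the lower quasi-isometric estimate $\lambda^{-1}d(x,y)-\mu\leq d'(f(x),f(y))$ forces $d(x,y)<\lambda(R'+3\mu)=:R$, so every such $y$ lies in $B(x,R)$.

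Next I would cover $B(x,R)$ inside $X$ and push the cover forward. Put $r:=(r'-2\mu)/\lambda$. One checks that $r>r_{b}$ — this is exactly where the hypothesis $r'>\lambda\mu+\mu+r_{b}\lambda\geq 2\mu+\lambda r_{b}$ (using $\lambda\geq 1$) is used — and that $R>r$ (since $R\geq R'+3\mu>R'>r'\geq r$), so uniform coarse properness of $(X,d)$ applies at the admissible scale $R>r>r_{b}$: cover $B(x,R)$ by balls $B(y_{1},r),\dots,B(y_{N},r)$ with $N=N(R,r)$. If $z'\in B'(x',R')$ and its associated point $y(z')$ lies in $B(y_{j},r)$, the upper quasi-isometric estimate gives $d'(f(y(z')),f(y_{j}))<\lambda r+\mu$, hence $d'(z',f(y_{j}))<\lambda r+2\mu=r'$, that is, $z'\in B'(f(y_{j}),r')$. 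Thus $B'(x',R')\subseteq\bigcup_{i=1}^{N}B'(f(y_{i}),r')$, and we may set $N'(R',r'):=N(\lambda(R'+3\mu),(r'-2\mu)/\lambda)$.

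There is no substantive obstacle here: the argument is the standard ``pull back, cover, push forward'' transport of bounded geometry along a quasi-isometry. The only care needed is constant bookkeeping — checking that the intermediate scale $(R,r)=(\lambda(R'+3\mu),(r'-2\mu)/\lambda)$ in $X$ genuinely satisfies $R>r>r_{b}$, so that the hypothesis on $(X,d)$ may be invoked, and that the pushed-forward balls have radius at most $r'$; tracking these inequalities is precisely what pins down the stated threshold $\lambda\mu+\mu+r_{b}\lambda$ for $r'$. A purely technical point is that essential surjectivity provides only $d'(\cdot,f(X))\leq\mu$ rather than a realizing point, which is absorbed by the arbitrarily small enlargement mentioned above and has no effect on the final cover.
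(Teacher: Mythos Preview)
Your argument is correct and follows the same ``pull back, cover, push forward'' template as the paper. The one methodological difference worth noting is that the paper routes the pull-back through a formal $(\lambda,3\lambda\mu)$-quasi-isometric coarse inverse $g\colon X'\to X$ with $d'(f(g(y)),y)\leq\lambda\mu$, and accordingly uses the intermediate $X$-radius $s=\lambda^{-1}r'-\lambda^{-1}\mu-\mu$; the requirement $s>r_{b}$ is then \emph{exactly} the stated threshold $r'>\lambda\mu+\mu+r_{b}\lambda$. You bypass the coarse inverse and use essential surjectivity directly, which leads to the intermediate radius $r=(r'-2\mu)/\lambda$ and only requires $r'>2\mu+\lambda r_{b}$ --- a weaker condition, since $\lambda\geq1$ gives $\lambda\mu\geq\mu$. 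So your route is slightly more elementary and in fact establishes a marginally sharper admissible scale than the lemma claims; you correctly observe that the stated hypothesis is more than enough. The technical caveat about attainment of the infimum in essential surjectivity is handled appropriately.
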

\begin{proof}
Since $f \colon X \rightarrow X'$ is a $(\lambda,\mu)$-quasi-isometry, it is has a $(\lambda, 3 \lambda \mu)$-quasi-isometric coarse inverse $g \colon X' \rightarrow X$ where $d'(f(g(y)),y) \leq \lambda \mu$ for all $y \in X'$; see \cite{V}. Let $R' > \lambda \mu + \mu + r_{b} \lambda$. We claim that any $B(y,R') \subseteq X'$ can be covered by $N'(R',r')$ open balls or radius $R'>r'> \lambda \mu + \mu + r_{b}\lambda$. To begin $$g(B(y,R')) \subseteq B(g(y), \lambda R' + 3\lambda \mu),$$ and the latter can be covered by $N=N(\lambda R' + 3 \lambda \mu, s)$ balls $B(x_1,s), \dots, B(x_N,s)$ of radius $s > r_{b}$ as $X$ is uniformly coarsely proper. Choose $s = \lambda^{-1} r' - \lambda^{-1} \mu - \mu $. Now $E= f(B(g(y), \lambda R' + 3 \lambda \mu))$ is covered by the sets $f(B(x_i,s))$ and as $$f(B(x_i,s)) \subseteq B(f(x_i), \lambda s + \mu) = B(f(x_i), r'- \lambda \mu)$$ the balls $B(f(x_i), r' - \lambda \mu)$ cover $E$. Now since $d'(f(g(y)),y) \leq \lambda \mu$ for all $y \in X'$ \begin{align} B(y,R') \subseteq \lbrace x \in X' \colon d'(x,f(g(B(y,R')))) \leq \lambda \mu \rbrace \subseteq \lbrace x \in X' \colon d'(x,E) \leq \lambda \mu \rbrace, \nonumber \end{align} and as $E$ is covered by the balls $B(f(x_i), r' - \lambda \mu)$, the set $\lbrace x \in X' \colon d'(x,E) \leq \lambda \mu \rbrace$ is covered by the balls $B(f(x_i),r')$ covering $B(y,R)$ as well. Letting $N'(R',r')=N$ it follows that $(X',d')$ is uniformly coarsely proper for $R' >r'> \lambda \mu + \mu + r_{b} \lambda$. 
\end{proof}
The following appears in the proof of Corollary C.
\begin{lemma} \label{where}
If $G$ is locally compact and compactly generated by $S$ then $(G,d_S)$ is uniformly coarsely proper.
\end{lemma}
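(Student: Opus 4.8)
The plan is to verify the covering condition directly, the two ingredients being that closed balls of $(G,d_S)$ are compact and that, at some scale, a ball has nonempty interior. Throughout I may assume $S$ is symmetric and contains the identity $e$ (replacing $S$ by $S \cup S^{-1} \cup \{e\}$ affects neither $d_S$ nor the hypotheses), so that $d_S(g,h) = \min\{\, n \in \mathbb{N} : g^{-1}h \in S^{n}\,\}$ is a left-invariant, integer-valued metric for which $B(x,R) \subseteq xS^{\lceil R\rceil}$ for all $x \in G$ and $R>0$. Since multiplication is continuous and $S$ is compact, every power $S^{n}$ is compact, hence closed (as $G$ is Hausdorff); in particular every ball of $(G,d_S)$ is relatively compact.

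The crucial step is to produce a ball with nonempty interior, which cannot be read off from $S$ directly since $S$ itself may have empty interior. Here I would use that $G$, being locally compact Hausdorff, is a Baire space: as $G = \bigcup_{n\geq 1} S^{n}$ is a countable union of closed sets, some $S^{k_{0}}$ has nonempty interior. Picking $u_{0} \in \mathrm{int}(S^{k_{0}})$, the set $W := u_{0}^{-1}\,\mathrm{int}(S^{k_{0}})$ is an open neighbourhood of $e$ with $W \subseteq S^{-k_{0}}S^{k_{0}} = S^{2k_{0}}$ (using that $S$ is symmetric), so that $xW \subseteq xS^{2k_{0}} \subseteq B(x,r)$ for every $x \in G$ and every $r > 2k_{0}$.

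Finally, set $r_{b} := 2k_{0}$ and fix $R > r > r_{b}$. The compact set $S^{\lceil R\rceil} \supseteq B(e,R)$ is covered by the open family $\{\, xW : x \in G\,\}$, so finitely many of them, say $x_{1}W,\dots,x_{M}W$ with $M = M(R)$, already suffice; since $r > 2k_{0}$ each $x_{i}W \subseteq B(x_{i},r)$, whence $B(e,R)$ is covered by $M(R)$ balls of radius $r$. Left-invariance of $d_{S}$ then yields the same bound for $B(a,R) = aB(e,R)$ for every $a \in G$, so putting $N(R,r) := M(R)$ exhibits $(G,d_{S})$ as uniformly coarsely proper. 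The one genuine obstacle is the interior claim in the middle paragraph, which is precisely where local compactness enters; alternatively one could build a quasi-lattice by hand — take a set maximal among those whose translates of a fixed small symmetric open neighbourhood of $e$ are pairwise disjoint, using $W$ for coboundedness and a Haar-measure count inside a compact ball for uniform local finiteness — and then quote \cite[Theorem 3.D.15]{CorHar}, but this route requires the same Baire input.
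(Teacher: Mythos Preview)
Your argument is correct. The Baire step is the right idea: once some $S^{k_0}$ has nonempty interior you get an open identity neighbourhood $W \subseteq S^{2k_0}$, and then compactness of $S^{\lceil R\rceil}$ together with left-invariance yields the covering bound uniformly in the basepoint. The only cosmetic point is the remark that symmetrising $S$ ``affects neither $d_S$ nor the hypotheses'': this is fine under the standard convention that word length is computed with $S\cup S^{-1}$, and in any case the two word metrics are bi-Lipschitz, so nothing is at stake.

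This is a genuinely different route from the paper's proof. The paper argues indirectly: it quotes \cite[Proposition 6.6]{RT} to produce a quasi-isometry from $(G,d_S)$ to a connected metric graph of bounded valency, observes that such a graph has bounded growth at some scale, upgrades this to uniformly coarsely proper via Lemma~\ref{improvement} (using that the graph is geodesic), and then transports the conclusion back along the quasi-isometry by Lemma~\ref{growthlargescale}. Your argument is more self-contained---it uses only Baire and compactness of the $S^n$ and never leaves $(G,d_S)$---and in particular it avoids the external structural input from \cite{RT}. The paper's approach, on the other hand, fits its internal economy: Lemmas~\ref{improvement} and~\ref{growthlargescale} are already in place, so the lemma reduces to a single citation. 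Your direct proof has the additional virtue of making the threshold $r_b$ explicit in terms of the Baire index $k_0$, whereas the paper's route buries the constants inside the quasi-isometry and Lemma~\ref{growthlargescale}.
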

\begin{proof}
By \cite[Proposition 6.6]{RT} the word metric space $(G,d_S)$ is quasi-isometric to a connected metric graph $(X,d)$ of bounded valency implying it has bounded growth at some scale. Since $(X,d)$ is geodesic this implies that $(X,d)$ is uniformly coarsely proper by Lemma \ref{improvement}. The claim now follows since being uniformly coarsely proper is a quasi-isometry invariant. 
\end{proof}

\section{The hyperbolic cone} \label{HC}
The original construction of the hyperbolic cone is due to Bonk and Schramm who introduced in \cite{BS} the metric space $(\mathrm{Con}(Z),\rho_{BS})$ over a bounded metric space $(Z,d)$ where $\mathrm{Con}(Z) = Z \times (0,D]$ for $D= \mathrm{diam}(Z)$ assuming that $D>0$, and $${\rho}_{BS}((x,t),(y,s)) = 2 \log\left( \dfrac{d(x,y)+ \max \lbrace t,s \rbrace}{\sqrt{ts}}\right).$$ We note that $(\mathrm{Con}(Z),\rho_{BS})$ and $(\mathcal{H}(Z),\rho)$ are isometric where the isometry from $(\mathrm{Con}(Z),\rho_{BS})$ to $(\mathcal{H}(Z),\rho)$ is given by $(x,t) \mapsto (x,\log D - \log t).$ We use this observation implicitly when making use of the results in \cite{BS}. \subsection{Elementary structure of the hyperbolic cone}
For every $0 \leq r < \infty$, single out the following subsets of $\mathcal{H}(Z)$:
\begin{align}
B_r = Z \times [0,r), \, \overline{B}_r = Z \times [0,r], \textrm{ and } S_r = Z \times \lbrace r \rbrace. \nonumber
\end{align} 
\begin{lemma} \label{help1} Let $(Z,d)$ be a bounded space containing at least two points. Then 
\begin{enumerate}
\item[(i)] the hyperbolic cone $(\mathcal{H}(Z), \rho)$ is $2\mu$-roughly geodesic for some $\mu \geq 0$,
\item[(ii)] for every $x \in Z$ the map $\sigma_x \colon [0, \infty) \rightarrow \mathcal{H}(Z)$ given by $\sigma_x (r) \mapsto (x,r)$ is a geodesic ray in $(\mathcal{H}(Z), \rho)$,
\item[(iii)] if $(Z,d)$ is uniformly coarsely connected then $\mathcal{H}(Z) \setminus B_r$ is uniformly coarsely connected.
\end{enumerate}
\end{lemma}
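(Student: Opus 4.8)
The plan is to dispatch the three parts in order, the first two being short and the third carrying the actual content. For (i), I would invoke the Bonk--Schramm analysis of the cone: via the isometry $(x,t)\mapsto(x,\log D-\log t)$ recorded above, $(\mathcal{H}(Z),\rho)$ is isometric to $(\mathrm{Con}(Z),\rho_{BS})$, which is a roughly geodesic Gromov hyperbolic space by \cite{BS}, so any two points of $\mathcal{H}(Z)$ are joined by a $\mu$-rough geodesic for some fixed $\mu\geq 0$. Lemma \ref{vais} then reparametrises each such $\mu$-rough geodesic by arclength at the cost of doubling the constant, producing a $2\mu$-rough geodesic $[0,\rho(\cdot,\cdot)]\to\mathcal{H}(Z)$ between the same endpoints, which is precisely the assertion that $(\mathcal{H}(Z),\rho)$ is $2\mu$-roughly geodesic.

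For (ii) I would just compute. Fix $x\in Z$ and $0\leq t\leq s$; then $d(x,x)=0$ and $\max\{e^{-t},e^{-s}\}=e^{-t}$, so
$$\rho(\sigma_x(t),\sigma_x(s)) = 2\log\!\left(\frac{e^{-t}D}{e^{-(s+t)/2}D}\right) = 2\log\!\left(e^{(s-t)/2}\right) = s-t=|t-s|.$$
Hence $\sigma_x$ is an isometric embedding of $[0,\infty)$ into $\mathcal{H}(Z)$, i.e.\ a geodesic ray.

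For (iii), note $\mathcal{H}(Z)\setminus B_r=Z\times[r,\infty)$, and fix $\varepsilon>0$ and two points $(x,t),(y,s)$ in this set. The strategy is ``up, across, down''. Put $M=\max\{t,s\}\ (\geq r)$ and move vertically from $(x,t)$ to $(x,M)$ and from $(y,s)$ to $(y,M)$; by (ii) each of these is a genuine geodesic segment lying entirely in $Z\times[r,\infty)$ (the heights traversed lie in $[t,M]$ resp.\ $[s,M]$, both inside $[r,\infty)$), so each can be broken into consecutive points at $\rho$-distance $\leq\varepsilon$. It remains to join $(x,M)$ to $(y,M)$ inside $Z\times\{M\}$. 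For $u,v\in Z$ one computes
$$\rho((u,M),(v,M)) = 2\log\!\left(1+\frac{d(u,v)}{e^{-M}D}\right),$$
which is $\leq\varepsilon$ exactly when $d(u,v)\leq(e^{\varepsilon/2}-1)e^{-M}D=:\eta$, with $\eta>0$ since $\varepsilon>0$ and $D=\mathrm{diam}(Z)>0$. As $(Z,d)$ is uniformly coarsely connected it is $\eta$-coarsely connected, so there is an $\eta$-sequence $x=z_0,\dots,z_n=y$ in $Z$; lifting it to $(z_0,M),\dots,(z_n,M)$ gives an $\varepsilon$-sequence in $Z\times\{M\}$. Concatenating the three pieces yields an $\varepsilon$-sequence from $(x,t)$ to $(y,s)$ inside $\mathcal{H}(Z)\setminus B_r$; since $\varepsilon>0$ was arbitrary, $\mathcal{H}(Z)\setminus B_r$ is uniformly coarsely connected.

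I do not expect a real obstacle here: the lemma is mostly an assembly of \cite{BS}, Lemma \ref{vais}, the explicit metric formula, and the definitions. The one point worth a moment's care is the height bookkeeping in (iii) --- choosing the common height to be $M=\max\{t,s\}$ rather than something smaller guarantees the vertical moves never dip below $r$ --- together with the observation that the admissible horizontal scale $\eta$ is strictly positive for every prescribed $\varepsilon$, which is exactly where the hypotheses that $Z$ has at least two points (so $D>0$) and that $Z$ is uniformly coarsely connected enter.
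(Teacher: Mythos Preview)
Your proposal is correct and follows essentially the same approach as the paper: part (i) via \cite[Theorem 7.2]{BS} and Lemma \ref{vais}, part (ii) by the same direct computation, and part (iii) by using (ii) to reduce to points at a common height and then lifting a coarse chain in $Z$ to that level. The only cosmetic difference is that the paper moves both points down to height $r$ and works on $S_r$, whereas you move them up to height $M=\max\{t,s\}$; either choice keeps the vertical moves inside $\mathcal{H}(Z)\setminus B_r$ and yields the same horizontal step-size calculation.
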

\begin{proof}
(i) The claim follows by Lemma \ref{vais} observing that for every $x,y \in \mathcal{H}(Z)$ there exists a $\mu$-rough geodesic $\gamma \colon [a,b] \rightarrow \mathcal{H}(Z)$ $x \curvearrowright y$ by \cite[Theorem 7.2]{BS}. \par 
(ii) Fix $x \in Z$ and let $0 \leq r \leq s$. The claim follows from observing that now $$\rho(\sigma_x(r), \sigma_x(s)) = 2 \log \left( \dfrac{e^{-s}}{e^{-(s+r)/2}}\right) = s-r.$$ \par 
(iii) By (ii) we can assume that $t=s=r$. As $(Z,d)$ is $(D(e^{\varepsilon/2}-1)/e^r)$-coarsely connected for every $\varepsilon>0$ the space $(S_r,\rho \vert_{S_r})$ is $\varepsilon$-coarsely connected for every $\varepsilon > 0$ from which the claim follows. 
\end{proof} 
Let $t \geq 0$ and define the projections
\begin{align}
\pi_t \colon \mathcal{H}(Z) \rightarrow S_t \textrm{ by } \pi_t(p,s) = (p,t), \textrm{ and }  h \colon \mathcal{H}(Z) \rightarrow [0, \infty) \textrm{ by } h(p,s) = s. \nonumber 
\end{align}
\begin{lemma} \label{contraction}
$\pi_t \colon \mathcal{H}(Z) \rightarrow S_t$ restricted to $\mathcal{H}(Z) \setminus B_t$ is $1$-Lipschitz.
\end{lemma}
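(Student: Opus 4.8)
The plan is to compute both sides of the required contraction inequality explicitly and reduce it to an elementary termwise comparison. Fix $p=(x,s)$ and $q=(y,u)$ in $\mathcal{H}(Z)\setminus B_t = Z\times[t,\infty)$, so that $s,u\ge t$, and assume without loss of generality that $s\le u$. Then $\pi_t(p)=(x,t)$ and $\pi_t(q)=(y,t)$, and unwinding the definition of $\rho$ (using $s\le u$ to evaluate $\max\{e^{-s},e^{-u}\}=e^{-s}$, and $\max\{e^{-t},e^{-t}\}=e^{-t}$) gives
\[
\rho(\pi_t(p),\pi_t(q)) = 2\log\left(1 + \frac{e^{t}d(x,y)}{D}\right),\qquad \rho(p,q) = 2\log\left(\frac{e^{(s+u)/2}d(x,y)}{D} + e^{(u-s)/2}\right).
\]
Since $r\mapsto 2\log r$ is increasing, it therefore suffices to prove
\[
1 + \frac{e^{t}d(x,y)}{D} \le \frac{e^{(s+u)/2}d(x,y)}{D} + e^{(u-s)/2}.
\]

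To finish, I would compare the two sides term by term. Because $s\ge t$ and $u\ge t$ we have $(s+u)/2\ge t$, hence $e^{(s+u)/2}\ge e^{t}$, so the first summand on the right dominates $e^{t}d(x,y)/D$; because $u\ge s$ we have $(u-s)/2\ge 0$, hence $e^{(u-s)/2}\ge 1$. Adding these two inequalities yields the displayed estimate, and therefore $\rho(\pi_t(p),\pi_t(q))\le\rho(p,q)$, which is the assertion.

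The argument is essentially bookkeeping, and I do not anticipate a genuine obstacle: the only point requiring a little care is tracking the $\max$ in the formula for $\rho$, which is handled cleanly by the ordering assumption $s\le u$, and the hypothesis $s,u\ge t$ is precisely what makes both termwise comparisons go through. (It is worth noting that the restriction to $\mathcal{H}(Z)\setminus B_t$ is necessary: for points with small height coordinate, e.g.\ $p=(x,0)$, $q=(y,0)$ with $x\ne y$ and $t$ large, one has $\rho(\pi_t(p),\pi_t(q))=2\log(1+e^{t}d(x,y)/D)$, which exceeds $\rho(p,q)=2\log(1+d(x,y)/D)$, so $\pi_t$ is not $1$-Lipschitz on all of $\mathcal{H}(Z)$.)
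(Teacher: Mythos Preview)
Your proof is correct and follows essentially the same approach as the paper: both expand $\rho$ explicitly, use the monotonicity of the logarithm, and reduce the claim to elementary exponential inequalities that follow from the ordering $t\le s\le u$. The only cosmetic difference is that the paper chains the estimates inside the logarithm via the intermediate level $s=\min\{s,u\}$, whereas you rewrite both sides first and then compare termwise; these are the same computation presented slightly differently.
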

\begin{proof}
Let $(p,r),(q,s) \in \mathcal{H}(Z) \setminus B_t$ and $t \leq s \leq r$. The claim follows observing that
\begin{align}
\rho(\pi_t(p,r),\pi_t(q,s)) &= 2 \log \left( \dfrac{d(p,q)}{e^{-t}D} + 1\right) \leq 2 \log \left( \dfrac{d(p,q)}{e^{-s}D} + 1\right) \nonumber \\ &= 2 \log \left( \dfrac{d(p,q) + \max \lbrace e^{-s},e^{-r} \rbrace D}{e^{-s}D} \right) \nonumber \\ &\leq 2 \log \left( \dfrac{d(p,q) + \max \lbrace e^{-s},e^{-r} \rbrace D}{e^{-(s+r)/2}D} \right) = \rho((p,r),(q,s)). \nonumber 
\end{align}
\end{proof}

\begin{lemma} \label{BB}
If $(p,r) \in \mathcal{H}(Z)$ and $\delta > 0$ then $B((p,r),\delta) \subseteq Z \times (r-\delta, r+\delta).$ In particular if $x,y \in B((p,r),\delta)$ then $\vert h(x)-h(y)\vert < 2 \delta$.
\end{lemma}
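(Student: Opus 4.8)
The plan is to reduce the whole statement to the single elementary fact that the height function $h\colon \mathcal{H}(Z)\to[0,\infty)$, $h(p,s)=s$, is $1$-Lipschitz; this is the ``coordinate'' companion of the geodesic-ray computation in Lemma~\ref{help1}(ii), and is an immediate consequence of the observation that $\rho((p,r),(q,s))\ge\rho((p,r),(p,s))$ since $d(p,q)\ge 0$.

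First I would bound $\rho$ from below by discarding the nonnegative term $d(p,q)$ from the numerator in the definition of $\rho$ and using $\max\{e^{-r},e^{-s}\}=e^{-\min\{r,s\}}$: for any $(p,r),(q,s)\in\mathcal{H}(Z)$,
\[
\rho((p,r),(q,s))\ \geq\ 2\log\!\left(\frac{e^{-\min\{r,s\}}D}{e^{-(s+r)/2}D}\right)\ =\ 2\!\left(\tfrac{r+s}{2}-\min\{r,s\}\right)\ =\ |r-s|,
\]
that is, $|h(x)-h(y)|\le\rho(x,y)$ for all $x,y\in\mathcal{H}(Z)$. Granting this, the inclusion is immediate: if $(q,s)\in B((p,r),\delta)$ then $|r-s|=|h(p,r)-h(q,s)|\le\rho((p,r),(q,s))<\delta$, so $s\in(r-\delta,r+\delta)$, and since $(q,s)$ was arbitrary we get $B((p,r),\delta)\subseteq Z\times(r-\delta,r+\delta)$. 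For the last assertion, if $x,y\in B((p,r),\delta)$ one may either invoke the inclusion just proved (both $h(x),h(y)$ lie in $(r-\delta,r+\delta)$, hence differ by less than $2\delta$) or, more directly, combine the triangle inequality $\rho(x,y)<2\delta$ with the Lipschitz bound to get $|h(x)-h(y)|\le\rho(x,y)<2\delta$.

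There is no real obstacle here: the only arithmetic that needs any care is the identity $\max\{e^{-r},e^{-s}\}=e^{-\min\{r,s\}}$ and the ensuing cancellation against the exponent $(r+s)/2$ in the denominator, which produces exactly $|r-s|$.
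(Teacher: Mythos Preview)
Your proof is correct and essentially identical to the paper's: both arguments drop the nonnegative term $d(p,q)$ from the numerator of $\rho$ to obtain $\rho((p,r),(q,s))\geq |r-s|$, and deduce the inclusion and the $2\delta$-bound immediately. The only cosmetic difference is that the paper phrases the lower bound as $|r-s|=\rho((p,r),(p,s))\leq\rho((p,r),(q,s))$, while you compute the logarithm directly.
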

\begin{proof}
Let $(q,s) \in B((p,r), \delta)$. The claim follows observing that 
\begin{align}
&\vert r-s\vert = \rho((p,r),(p,s)) = 2 \log \left( \dfrac{\max \lbrace e^{-r},e^{-s} \rbrace D}{e^{-(r+s)/2}D}\right) \nonumber \\ &\leq 2 \log \left( \dfrac{d(p,q) + \max \lbrace e^{-r},e^{-s} \rbrace D}{e^{-(r+s)/2}D}\right) = \rho((p,r), (q,s)) < \delta. \nonumber 
\end{align}
\end{proof}
\subsection{Intrinsic structure of the hyperbolic cone}
By Lemma \ref{help1} the hyperbolic cone $(\mathcal{H}(Z), \rho)$ is $2\mu$-roughly geodesic for some $\mu \geq 0$ and we fix $L(\mu) = 1+2\mu \geq 1$. Define  $\rho_r \colon \mathcal{H}(Z) \setminus B_r \times \mathcal{H}(Z) \setminus B_r \rightarrow [0, \infty]$ for all $r \geq 0$ by $$\rho_{r}(x,y) = \inf \left\lbrace \sum_{i=0}^{n-1} \rho(y_i,y_{i+1}) \colon x=y_0, \dots, y_n=y \, L(\mu)\textrm{-sequence in } \mathcal{H}(Z) \setminus B_r\right\rbrace.$$ 
This replaces $d_r$ in \cite[Section 3]{Cao}. An $L(\mu)$-sequence $x \curvearrowright y$ in $\mathcal{H}(Z) \setminus B_r$ is called an \emph{admissible sequence} for $\rho_{r}(x,y)$.  
\begin{lemma}
If $(Z,d)$ is a bounded uniformly coarsely connected space containing at least two points then $\rho_r$ is a metric on $\mathcal{H}(Z) \setminus B_r$.
\end{lemma}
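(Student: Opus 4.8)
The plan is to check the metric axioms one at a time on $\mathcal{H}(Z)\setminus B_r = Z\times[r,\infty)$, the only point with any content being that $\rho_r$ never takes the value $+\infty$; this is exactly where the hypothesis that $(Z,d)$ is uniformly coarsely connected is used. So the first step I would carry out is finiteness. By Lemma \ref{help1}(iii) the set $\mathcal{H}(Z)\setminus B_r$ is uniformly coarsely connected, hence in particular $L(\mu)$-coarsely connected; thus for any $x,y\in\mathcal{H}(Z)\setminus B_r$ there is at least one $L(\mu)$-sequence $x=y_0,\dots,y_n=y$ in $\mathcal{H}(Z)\setminus B_r$, and the associated sum $\sum_{i=0}^{n-1}\rho(y_i,y_{i+1})$ is a finite upper bound for $\rho_r(x,y)$. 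Hence $\rho_r(x,y)<\infty$ and the infimum defining $\rho_r(x,y)$ is taken over a nonempty set of non-negative reals, so $\rho_r(x,y)\ge 0$; taking the one-point sequence ($n=0$, empty sum) gives $\rho_r(x,x)=0$.

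Next I would dispose of symmetry and the triangle inequality, both of which are formal manipulations of admissible sequences. For symmetry, reversing an admissible sequence for $\rho_r(x,y)$ produces an admissible sequence for $\rho_r(y,x)$ of equal sum, since $\rho$ is symmetric, giving $\rho_r(y,x)\le\rho_r(x,y)$ and, by the same argument, the reverse inequality. For the triangle inequality, given admissible sequences $x\curvearrowright y$ and $y\curvearrowright z$ in $\mathcal{H}(Z)\setminus B_r$, their concatenation is an admissible sequence $x\curvearrowright z$ whose sum is the sum of the two sums; taking the infimum over the two sequences independently yields $\rho_r(x,z)\le\rho_r(x,y)+\rho_r(y,z)$.

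Finally, positivity: for any $L(\mu)$-sequence $x=y_0,\dots,y_n=y$, the triangle inequality for $\rho$ gives $\rho(x,y)\le\sum_{i=0}^{n-1}\rho(y_i,y_{i+1})$, so passing to the infimum we get $\rho(x,y)\le\rho_r(x,y)$; consequently $\rho_r(x,y)=0$ forces $\rho(x,y)=0$, i.e. $x=y$. I do not expect a genuine obstacle anywhere in this argument — the single structural input is Lemma \ref{help1}(iii), without which admissible sequences need not exist and $\rho_r$ could equal $+\infty$, so that is the step that carries the content.
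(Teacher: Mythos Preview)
Your proof is correct and follows essentially the same approach as the paper: both use Lemma~\ref{help1}(iii) to guarantee that admissible sequences exist (hence $\rho_r<\infty$), and both handle the remaining axioms by the obvious formal manipulations. The only cosmetic difference is in the positivity step: the paper observes that $\rho_r(x,y)=\rho(x,y)$ whenever $\rho(x,y)\le L(\mu)$, whereas you prove the global inequality $\rho(x,y)\le\rho_r(x,y)$ via the triangle inequality for $\rho$; these are trivially equivalent routes to the same conclusion.
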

\begin{proof}
By Lemma \ref{help1} there exists an admissible sequence $x \curvearrowright y$ for any $x,y \in \mathcal{H}(Z) \setminus B_r$ so $\rho_r(x,y) < \infty$. That $\rho_r(x,y) = 0$ if and only if $x=y$ holds as $\rho_r(x,y) = \rho(x,y)$ if $\rho(x,y) \leq L(\mu)$. The rest is clear. 
\end{proof}
The following is left as an elementary exercise in analysis. 
\begin{lemma} \label{inequality}
For any $\varepsilon > 0$ there exists a constant $\kappa(\varepsilon) > 1$ such that $$1+e^{-s}t \leq (1+t)^{\kappa(\varepsilon)^{-s}}$$ for all $s \geq 0$ and all $t \in [0, e^{\varepsilon}]$. \qed
\end{lemma}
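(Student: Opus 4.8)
The plan is to take logarithms and turn the two-variable inequality into the monotonicity of a single function. Set $a = e^{-s} \in (0,1]$ and look for $\kappa(\varepsilon)$ of the form $e^{c}$ with $c > 0$, so that $\kappa(\varepsilon)^{-s} = a^{c}$. For $t = 0$ the inequality is the trivial equality $1 \leq 1$, so assume $t \in (0, e^{\varepsilon}]$. Taking logarithms, the claim becomes $\log(1+at) \leq a^{c}\log(1+t)$, and dividing by $(at)^{c}$ this is exactly $\psi(at) \leq \psi(t)$, where $\psi(u) = u^{-c}\log(1+u)$. Since $a \leq 1$ gives $0 < at \leq t \leq e^{\varepsilon}$, it therefore suffices to pick $c > 0$ making $\psi$ non-decreasing on $(0, e^{\varepsilon}]$.

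First I would differentiate: $\psi'(u) = u^{-c-1}\bigl(\tfrac{u}{1+u} - c\log(1+u)\bigr)$, so on $(0, e^{\varepsilon}]$ one has $\psi' \geq 0$ precisely when $c \leq h(u) := \tfrac{u}{(1+u)\log(1+u)}$. Thus everything reduces to showing that $h$ has a positive minimum on $(0, e^{\varepsilon}]$; I would in fact show $h$ is decreasing, whence the minimum is $h(e^{\varepsilon}) = \tfrac{e^{\varepsilon}}{(1+e^{\varepsilon})\log(1+e^{\varepsilon})} > 0$, and set $\kappa(\varepsilon) = \exp\bigl(h(e^{\varepsilon})\bigr) > 1$.

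To see that $h$ is decreasing, substitute $v = \log(1+u)$, which is increasing in $u$; then $h$ becomes $k(v) = \tfrac{1 - e^{-v}}{v}$, and $k'(v) = v^{-2}\bigl(e^{-v}(1+v) - 1\bigr) < 0$ because $e^{v} > 1+v$ for $v > 0$. Hence $h$ is decreasing, completing the argument.

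There is no genuine obstacle here — the paper rightly calls it an exercise. The only idea is the substitution $\psi(u) = u^{-c}\log(1+u)$, after which the estimate collapses to the elementary inequality $e^{v} > 1+v$; the one point worth noting is that monotonicity of $\psi$ is only needed on the bounded interval $(0, e^{\varepsilon}]$ — no single exponent $c > 0$ makes $\psi$ monotone on all of $(0, \infty)$, which is exactly where the hypothesis $t \in [0, e^{\varepsilon}]$ is used.
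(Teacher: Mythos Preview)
Your argument is correct: the substitution $a=e^{-s}$, $\kappa=e^{c}$ reduces the claim to monotonicity of $\psi(u)=u^{-c}\log(1+u)$ on $(0,e^{\varepsilon}]$, and your computation of $\psi'$ and the decrease of $h(u)=\tfrac{u}{(1+u)\log(1+u)}$ via $e^{v}>1+v$ is clean and complete. The paper gives no proof of this lemma---it is stated with a \qed{} and left as an exercise---so there is nothing to compare; your write-up could stand as the omitted argument.
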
 
The following now generalises \cite[Lemma 3.1]{Cao}. \begin{proposition} \label{hypinequality}
Suppose $(Z,d)$ is a bounded uniformly coarsely connected space containing at least two points $x,y \in Z$ and $\sigma_x \colon [0,\infty) \rightarrow \mathcal{H}(Z)$, $\sigma_x(t) = (x,t)$, and $\sigma_y \colon [0,\infty) \rightarrow \mathcal{H}(Z)$, $\sigma_y(t) = (y,t)$. Then $$\rho_{r+t}(\sigma_x(r+t), \sigma_y(r+t)) \geq \kappa(L(\mu))^t\rho_r(\sigma_x(r),\sigma_y(r)),$$ for all $r \geq 0$ and all $t \geq 0$.
\end{proposition}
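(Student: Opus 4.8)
The plan is to start from a near-optimal admissible sequence for $\rho_{r+t}(\sigma_x(r+t),\sigma_y(r+t))$, push it down to the level set $S_r$ by the projection $\pi_r$, and observe two things: the pushed-down sequence is admissible for $\rho_r(\sigma_x(r),\sigma_y(r))$, and $\pi_r$ shrinks each consecutive gap by a factor of at least $\kappa(L(\mu))^{t}$. Summing the gap estimates and passing to the infimum then gives $\rho_r(\sigma_x(r),\sigma_y(r))\le\kappa(L(\mu))^{-t}\rho_{r+t}(\sigma_x(r+t),\sigma_y(r+t))$, which is the asserted inequality rearranged. Uniform coarse connectedness of $(Z,d)$ enters only to guarantee, via Lemma~\ref{help1}(iii), that admissible sequences exist, so that the quantities in question are finite.

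Concretely, first I would fix an $L(\mu)$-sequence $y_0=\sigma_x(r+t),y_1,\dots,y_n=\sigma_y(r+t)$ in $\mathcal{H}(Z)\setminus B_{r+t}$ with $\sum_{i=0}^{n-1}\rho(y_i,y_{i+1})$ as close as we like to $\rho_{r+t}(\sigma_x(r+t),\sigma_y(r+t))$. Since $B_r\subseteq B_{r+t}$, all $y_i$ lie in $\mathcal{H}(Z)\setminus B_r$, so by Lemma~\ref{contraction} the projected points $\pi_r(y_i)\in S_r\subseteq\mathcal{H}(Z)\setminus B_r$ satisfy $\rho(\pi_r(y_i),\pi_r(y_{i+1}))\le\rho(y_i,y_{i+1})\le L(\mu)$; and $\pi_r(y_0)=(x,r)=\sigma_x(r)$, $\pi_r(y_n)=(y,r)=\sigma_y(r)$ by Lemma~\ref{help1}(ii). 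Hence $\pi_r(y_0),\dots,\pi_r(y_n)$ is an admissible sequence for $\rho_r(\sigma_x(r),\sigma_y(r))$, and in particular $\rho_r(\sigma_x(r),\sigma_y(r))\le\sum_{i=0}^{n-1}\rho(\pi_r(y_i),\pi_r(y_{i+1}))$.

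The heart of the argument is the pointwise contraction estimate $\rho(\pi_r(y_i),\pi_r(y_{i+1}))\le\kappa(L(\mu))^{-t}\rho(y_i,y_{i+1})$. Write $y_i=(p,s_1)$ and $y_{i+1}=(q,s_2)$ with $r+t\le s_1\le s_2$, without loss of generality. Lemma~\ref{contraction} applied at level $s_1$ gives $\rho((p,s_1),(q,s_1))\le\rho(y_i,y_{i+1})$, so it is enough to compare $\rho((p,r),(q,r))$ with $\rho((p,s_1),(q,s_1))$. Putting $a=d(p,q)/(e^{-s_1}D)$ and $\tau=s_1-r\ge t$, the definition of $\rho$ turns the desired bound into $1+e^{-\tau}a\le(1+a)^{\kappa(L(\mu))^{-t}}$. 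Now $2\log(1+a)=\rho((p,s_1),(q,s_1))\le\rho(y_i,y_{i+1})\le L(\mu)$, so $a\le e^{L(\mu)/2}\le e^{L(\mu)}$, and Lemma~\ref{inequality} with $\varepsilon=L(\mu)$ yields $1+e^{-\tau}a\le(1+a)^{\kappa(L(\mu))^{-\tau}}$; since $\kappa(L(\mu))>1$, $\tau\ge t$, and $1+a\ge1$, the exponent may be enlarged from $\kappa(L(\mu))^{-\tau}$ to $\kappa(L(\mu))^{-t}$, completing the pointwise estimate.

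Summing over $i$ gives $\rho_r(\sigma_x(r),\sigma_y(r))\le\kappa(L(\mu))^{-t}\sum_{i=0}^{n-1}\rho(y_i,y_{i+1})$, and letting this sum decrease to $\rho_{r+t}(\sigma_x(r+t),\sigma_y(r+t))$ finishes the argument. The one genuine obstacle is the pointwise estimate: consecutive points of the chosen sequence may sit at different heights, which is handled by the auxiliary projection $\pi_{s_1}$; and, more importantly, Lemma~\ref{inequality} applies only when $a$ lies in a bounded range — which is exactly what the $L(\mu)$-sequence constraint in the definition of $\rho_r$ secures, and the reason Cao's path metric $d_r$ is here replaced by the coarse metric $\rho_r$.
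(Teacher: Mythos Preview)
Your proof is correct and follows essentially the same approach as the paper: take an admissible sequence for $\rho_{r+t}$, project it down via $\pi_r$, and use Lemma~\ref{inequality} to get the pointwise contraction factor $\kappa(L(\mu))^{-t}$ on each gap. The only cosmetic difference is that you first project to the intermediate level $s_1=\min(h(y_i),h(y_{i+1}))$ and apply Lemma~\ref{inequality} with the variable $\tau=s_1-r\geq t$, while the paper applies it directly at the fixed level $r+t$; both routes yield the same bound.
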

\begin{proof}
Without loss of generality suppose $t > 0$ and let $((p_i,t_i))_i$ be an admissible sequence for $\rho_{r+t}(\sigma_x(r+t), \sigma_y(r+t)).$ Since $\pi_r$ is $1$-Lipshitz by Lemma \ref{contraction}, the sequence $((p_i,r))_i$ is an admissible sequence for $\rho_{r}(\sigma_x(r), \sigma_y(r))$ and
\begin{align}
&\rho((p_i,r),(p_{i+1},r)) = 2 \log \left( \dfrac{d(p_i,p_{i+1}) + e^{-r} D}{e^{-r}D} \right) = 2 \log \left( 1+ e^{-t}\dfrac{d(p_i,p_{i+1})}{e^{-(r+t)}D}\right) \nonumber \\ &\leq 2 \log  \left( 1 + \dfrac{d(p_i,p_{i+1})}{e^{-(r+t)}D} \right)^{\kappa(L(\mu))^{-t}} \leq \kappa(L(\mu))^{-1}\rho((p_i,t_i),(p_{i+1},t_{i+1})) \nonumber
\end{align}
by Lemma \ref{inequality} since $\dfrac{d(p_i,p_{i+1})}{e^{-(r+t)}D} \leq e^{L(\mu)}$ observing that 
\begin{align} \log \left( 1+ \dfrac{d(p_i,p_{i+1})}{e^{-(r+t)}D}\right) &\leq  \log \left( 1+ \dfrac{d(p_i,p_{i+1})}{e^{-(t_i+t_{i+1})/2}D} \right) \nonumber \\ &\leq  \log \left( \dfrac{ \max \lbrace e^{-t_i},e^{-t_{i+1}} \rbrace D + d(p_i,p_{i+1})}{e^{-(t_i + t_{i+1})/2}D} \right) \nonumber \\ &\leq \rho((p_i,t_i)(p_{i+1},t_{i+1})) \leq L(\mu). \nonumber
\end{align} 

The claim now follows observing that $$\rho_r(\sigma_x(r), \sigma_y(r)) \leq \sum_{i=0}^{n-1} \rho ((p_i,r), (p_{i+1},r)) \leq \kappa(L(\mu))^{-t} \sum_{i=0}^{n-1}\rho ((p_i,t_i), (p_{i+1}, t_{i+1})),$$ and taking the infimum over all admissible sequences for $\rho_{r+t}(\sigma_x(r+t), \sigma_y(r+t))$.
\end{proof}

The following lemma now replaces \cite[Assertion 3.1]{Cao}.

\begin{lemma} \label{segment}
Suppose $(Z,d)$ is a bounded uniformly coarsely connected space containing at least two points. If $y=(p,r) \in \mathcal{H}(Z)$ and $t \geq 2L(\mu)$ then $$B((p,r+t), t/(2L(\mu))) \subseteq A^{t/2}_y \times [r,r+2t]$$ where ${A}^{t/2}_y = \lbrace x \in S_r \colon \rho_r(y,x) < t/2\rbrace$. 
\end{lemma}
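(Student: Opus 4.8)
The plan is to check the two coordinate conditions separately; both come down to the explicit formula for $\rho$, to Lemma \ref{BB}, and to the fact that an $L(\mu)$-close pair in $\mathcal{H}(Z)\setminus B_r$ contributes a single admissible step. Write $\delta:=t/(2L(\mu))$ and fix $(q,s)\in B((p,r+t),\delta)$; since $L(\mu)\geq 1$ we have $\delta\leq t/2$. The height condition is immediate: by Lemma \ref{BB}, $|s-(r+t)|<\delta\leq t/2$, so $s\in(r+\tfrac t2,\,r+\tfrac{3t}2)\subseteq[r,r+2t]$. It remains to prove $\rho_r((p,r),(q,r))<t/2$, i.e.\ that $(q,r)\in A^{t/2}_y$.

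For this I would first extract from $\rho((p,r+t),(q,s))<\delta$ and the definition of $\rho$ the estimate $d(p,q)<e^{\delta/2}e^{-((r+t)+s)/2}D$; then feeding in $s>r+t-\delta$ (again Lemma \ref{BB}) collapses this to $d(p,q)<e^{\delta}e^{-(r+t)}D=e^{\delta-t}e^{-r}D\leq e^{-t/2}e^{-r}D$, using $\delta\leq t/2$. Substituting into the formula for $\rho$ on $S_r$ gives $\rho((p,r),(q,r))=2\log\!\bigl(\tfrac{d(p,q)}{e^{-r}D}+1\bigr)<2\log(1+e^{-t/2})\leq 2e^{-t/2}$. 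Since $t\geq 2L(\mu)\geq 2$, the elementary inequality $2e^{-x}\leq x$ for $x\geq 1$, applied with $x=L(\mu)$ (and with $x=t/2$, noting $t/2\geq L(\mu)$), yields $2e^{-t/2}\leq\min\{L(\mu),t/2\}$. Hence $\rho((p,r),(q,r))<L(\mu)$, so the two-term $L(\mu)$-sequence $(p,r),(q,r)$ lying in $S_r\subseteq\mathcal{H}(Z)\setminus B_r$ is admissible for $\rho_r$, whence $\rho_r((p,r),(q,r))\leq\rho((p,r),(q,r))<t/2$, i.e.\ $(q,r)\in A^{t/2}_y$. Together with the height bound this gives $(q,s)\in A^{t/2}_y\times[r,r+2t]$.

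The only real obstacle is resisting the wrong route. Connecting $(p,r)$ to $(q,r)$ by first climbing the vertical geodesic ray $\sigma_p$ up to $(p,r+t)$ costs $t$, which is hopeless; and discretising a $2\mu$-rough geodesic from $(p,r+t)$ to $(q,s)$ and pushing it down by the $1$-Lipschitz map $\pi_r$ of Lemma \ref{contraction} loses an additive $O(\mu)$ per application, so only yields $\rho_r((p,r),(q,r))<t/2+2\mu$. The point — and the step I expect to be the crux of a clean write-up — is to observe that being $\delta$-close to $(p,r+t)$ at the \emph{high} level $r+t$ forces the base distance $d(p,q)$ to be exponentially small, of order $e^{-t/2}e^{-r}D$, so that $(p,r)$ and $(q,r)$ are in fact within a single $L(\mu)$-step at level $r$; this trivialises $\rho_r$ there, and everything else is the routine logarithmic bookkeeping above.
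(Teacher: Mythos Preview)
Your proof is correct, and it is genuinely different from --- and substantially simpler than --- the paper's argument.

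The paper proceeds by contradiction: assuming $\rho_r(y,\pi_r(z))\geq t/2$, it invokes Proposition~\ref{hypinequality} to push this up to level $r+t/2$, obtaining $\rho_{r+t/2}((p,r+t/2),\pi_{r+t/2}(z))\geq \kappa(L(\mu))^{t/2}\,t/2$; it then bounds the same quantity from above by discretising a $2\mu$-rough geodesic from $(p,r+t)$ to $z$ into an admissible sequence, which after some triangle-inequality bookkeeping gives at most $5L(\mu)t/2$; the resulting inequality $\kappa(L(\mu))^{t/2}\leq 5L(\mu)$ is declared impossible. Your approach bypasses all of this machinery by reading off directly from the cone metric that $d(p,q)<e^{-t/2}e^{-r}D$, so that $(p,r)$ and $(q,r)$ are within a single $L(\mu)$-step on $S_r$ and $\rho_r$ equals $\rho$ there. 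This avoids Proposition~\ref{hypinequality} and the rough-geodesic discretisation entirely, and has the further virtue of working cleanly for \emph{every} $t\geq 2L(\mu)$: the paper's contradiction $\kappa(L(\mu))^{t/2}\leq 5L(\mu)$ is only genuinely impossible once $t$ is large relative to $\kappa(L(\mu))$, which Lemma~\ref{inequality} does not control. The paper's route does illustrate how the exponential expansion of $\rho_r$ (Proposition~\ref{hypinequality}) is meant to be deployed, but for this lemma your direct computation is both shorter and more robust.
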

\begin{proof}
Towards a contradiction, suppose there exists a point \begin{align}z \in B((p,r+t), t/(2L(\mu))) \setminus A^{t/2}_y \times [r,r+2t] \label{tor0}.\end{align} Since $2 L(\mu) \geq 2$, by Lemma \ref{BB} \begin{align} r + t/2 \leq r+t-{t}/{(2L(\mu))} \leq h(z) \leq r+t+{t}/{(2L(\mu))} \leq r + {3t}/{2} \nonumber \end{align} for all $t \geq 2L(\mu)$. As $z \notin A^{t/2}_y \times [r,r+2t]$ \begin{align}\rho_r(y, \pi_r(z)) \geq t/2 \label{tor00},\end{align} for otherwise $\pi_r(z) \in A^{t/2}_y$ and $h(z) < r+2t$ which implies that $z \in A^{t/2}_y \times [r,r+2t]$ after all, contradicting (\ref{tor0}). By Proposition \ref{hypinequality} we now have \begin{align}\rho_{r+t/2}((p,r+t/2),\pi_{r+t/2}(z)) \geq \kappa(L(\mu))^{t/2} \rho_r(y,\pi_r(z)) \geq \kappa(L(\mu))^{t/2} t/2 \label{tor} \end{align} for all $t \geq 2L(\mu)$. Estimating the left-hand side from above we arrive at a contradiction completing the proof. Towards this,
\begin{align}
&\rho_{r+t/2}((p,r+t/2), \pi_{r+t/2}(z)) \nonumber \\ \leq & \rho_{r+t/2}((p,r+t/2), (p,r+t)) + \rho_{r+t/2}((p,r+t), \pi_{r+t/2}(z)) \nonumber \\ \leq & t/2 + \rho_{r+t/2}((p,r+t),z) + \rho_{r+t/2}(z,\pi_{r+t/2}(z)) \nonumber \\ \leq & t/2 + \rho_{r+t/2}((p,r+t),z) + 3t/2-t/2 \nonumber \\ =& 3t/2 + \rho_{r+t/2}((p,r+t),z) \label{tor000}. \end{align}
To estimate $\rho_{r+t/2}((p,r+t),z)$ from above, let $\gamma \colon [0,\rho((p,r+t),z)] \rightarrow \mathcal{H}(Z)$ be a $2\mu$-rough geodesic $(p,r+t) \curvearrowright z$ by Lemma \ref{help1}, fix $m \in \mathbb{N}$ such that $m-1 \leq \rho((p,r+t),z) \leq m$, and let $x_k = \gamma((k\rho((p,r+t),z)/m))$ for $k \in \lbrace 0, \dots, m\rbrace \subseteq \mathbb{N}$. We claim that $(x_k)_k$ is an admissible sequence for $\rho_{r+t/2}((p,r+t),z)$. To begin, $(x_k)_k$ is an $L(\mu)$-sequence $(p,r+t) \curvearrowright z$ of length $m$ since 
 
$$\rho(x_k, x_{k+1}) \leq \rho((p,r+t),z)/m + 2\mu \leq 1 + 2\mu = L(\mu).$$ The sequence is admissible if $x_k \in \mathcal{H}(Z) \setminus B_{r+t/2}$ for all $k \in \lbrace 0, \dots ,m\rbrace$. To see that this is the case, note that if $h(x_k) < r + t/2$ then $\rho(x_0,x_k) > t/2$ and $$t/2 < \rho(x_0,x_k) \leq k\rho((p,r+t),z)/m + 2\mu \leq \rho((p,r+t),z) + 2\mu \leq t/(2L(\mu)) + 2\mu$$ for all $t \geq 2L(\mu)$ which is not possible. Thus,  
$$\rho_{r+t/2}((p,r+t),z) \leq m L(\mu) \leq (t/(2L(\mu)) +1)L(\mu),$$ which together with (\ref{tor000}) gives that $$\rho_{r+t/2}((p,r+t/2), \pi_{r+t/2}(z)) \leq 3t/2 + (t/(2L(\mu))+1)L(\mu) \leq 5 L(\mu) t / 2$$ for all $t \geq 2L(\mu)$. Together with (\ref{tor}) this implies that $5L(\mu) \geq \kappa(L(\mu))^{t /2}$ for all $t \geq 2L(\mu)$ which is impossible. Thus, $z$ as in (\ref{tor0}) can not exist and the claim follows.
\end{proof}

\subsection{Cao's graph structure} \label{GS}
In this section we approximate the hyperbolic cone by a graph structure due to Cao in \cite{Cao}. Here by a \emph{graph} we mean a $1$-dimensional abstract simplicial complex $\Gamma$ whose $0$-simplexes are its \emph{vertices} and its $1$-simplexes its \emph{edges}. We write $\Gamma^{(0)}$ for the \emph{set of vertices} and $\Gamma^{(1)}$ for the \emph{set of edges}, and whenever $\lbrace u,v\rbrace \in \Gamma^{(1)}$ we say that $u$ and $v$ are \emph{neighbours} and write $u \sim v$. Let $N(v) = \lbrace u \colon u \sim v \rbrace$. If for some constant $c \in \mathbb{N}$ it holds that $\#N(v) \leq c$ for all $v \in\Gamma^{(0)}$ we say that $\Gamma$ has \emph{bounded valency (by $c$)}. \par 
A \emph{graph structure $(\Gamma X,d_{\Gamma})$ on $(X,d)$} is a pair where $\Gamma X$ is a graph with vertex set $\Gamma X^{(0)} = X$ and $d_\Gamma \colon X \times X \rightarrow [0,\infty]$ is given by
\begin{enumerate}
\item $d_\Gamma(x,y) = 0$ if and only if $x=y$,
\item $d_\Gamma(x,y) = n$ if the shortest edge path $x \curvearrowright y$ is of length $n$,
\item $d_\Gamma(x,y) = \infty$ if there is no edge path $x \curvearrowright y$, 
\end{enumerate} 
where an edge path $x \curvearrowright y$ (of length $n \in \mathbb{N}$) is any finite sequence $x=x_0, \dots, x_n=y$ of points in $X$ such that $x_{i} \sim x_{i+1}$ for all $0 \leq i \leq n-1$.
 
\subsubsection*{Cao's graph structure} \label{caocao}
Suppose $(\mathcal{H}(Z),\rho)$ is $2\mu$-roughly geodesic and uniformly coarsely proper for $R > r > r_b$ and fix $\delta > 0$ and $r_0 >0 $ such that  
\begin{align} r_0/3 >& \delta >  c(\mu)(r_b+1) \tag{$\theta_0$} \textrm{ and }\label{RR0} \\ \kappa(L(\mu))^{r_0} >&  8 \delta N(10\delta, \delta/c(\mu)) \tag{$\theta_1$} \label{RRR0} \end{align}
 where $c(\mu) = 2L(\mu) \geq 2$ hold.  
For $i \in \mathbb{N}$, let $N_{ir_0} = \lbrace (p_{i,\alpha},ir_0) \colon \alpha \in \mathcal{I}_{i} \rbrace$ be a maximal $\delta$-net in $(S_{ir_0}, \rho_{ir_0})$ indexed by $\mathcal{I}_i$ and write 
\begin{align}
q_{i,\alpha} &= (p_{i, \alpha},ir_0), \,
v_{i,\alpha} = \pi_{ir_0+\delta}(q_{i,\alpha}), \,
A(q_{i,\alpha}) = B_{\rho_{ir_0}}(q_{i,\alpha},3 \delta) \cap S_{ir_0}, \nonumber \\
V(v_{i,\alpha}) &=A(q_{i,\alpha}) \times [ir_0, (i+1)r_0]. \nonumber
\end{align}
The graph structure $(\Gamma \mathcal{H}(Z), d_\Gamma)$ where \begin{align}\Gamma\mathcal{H}(Z)^{(0)} = \bigcup_{i \in \mathbb{N}} \pi_{ir_0 + \delta}\left( N_{ir_0}\right) \, \textrm{and } \Gamma\mathcal{H}(Z)^{(1)} = \lbrace \lbrace u, v \rbrace \colon V(u) \cap V(v) \neq \emptyset \rbrace \nonumber\end{align} is called \emph{Cao's graph structure} and $\Gamma \mathcal{H}(Z)$ the \emph{Cao graph}. \subsection{Basic properties of Cao's graph structure}
We now prove that Cao's graph structure approximates the hyperbolic cone. 

\begin{proposition} \label{BIG2}
Let $(Z,d)$ be a bounded uniformly coarsely connected space containing at least two points with uniformly coarsely proper hyperbolic cone $(\mathcal{H}(Z),\rho)$. Then
\begin{enumerate}
\item[(i)] $\Gamma \mathcal{H}(Z)^{(0)}$ is $\delta/c(\mu)$-separated in $(\mathcal{H}(Z), \rho)$, 
\item[(ii)] $\Gamma \mathcal{H}(Z)^{(0)}$ is $2r_0$-cobounded in $(\mathcal{H}(Z), \rho)$,
\item[(iii)] $(\Gamma \mathcal{H}(Z), d_\Gamma)$ is quasi-isometric to $(\mathcal{H}(Z),\rho)$,
\item[(iv)] $\Gamma\mathcal{H}(Z)^{(0)}$ is countable and $\Gamma\mathcal{H}(Z)$ has bounded valency by $N(10r_0,\delta/c(\mu))$.
\end{enumerate}
\end{proposition}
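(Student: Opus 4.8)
The plan is to take the quasi-isometry in (iii) to be the inclusion $\iota\colon(\Gamma\mathcal{H}(Z)^{(0)},d_\Gamma)\hookrightarrow(\mathcal{H}(Z),\rho)$; then (i) supplies the separation used in (iv), (ii) supplies essential surjectivity of $\iota$, and (iii) is the real content. Throughout I would use repeatedly: (a) $\rho_r(x,y)=\rho(x,y)$ whenever $x,y\in\mathcal{H}(Z)\setminus B_r$ and $\rho(x,y)\le L(\mu)$ (the two-point sequence is then admissible, and every admissible sequence has $\rho$-length at least $\rho(x,y)$ by the triangle inequality), and more generally $\rho\le\rho_r$; (b) $\rho(x,y)\ge|h(x)-h(y)|$, which is the first line of the proof of Lemma~\ref{BB}; (c) $\sigma_x$ is a geodesic ray, so $\rho((x,a),(x,b))=|a-b|$ (Lemma~\ref{help1}(ii)); (d) the $\delta$-coboundedness of the maximal net $N_{ir_0}$ in $(S_{ir_0},\rho_{ir_0})$.

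For (i): if $u=v_{i,\alpha}$ and $v=v_{j,\beta}$ lie on distinct levels then by (b) $\rho(u,v)\ge|i-j|r_0\ge r_0>\delta>\delta/c(\mu)$, using \ref{RR0} and $c(\mu)\ge 2$. If $i=j$ and $\alpha\ne\beta$, then $q_{i,\alpha}\ne q_{i,\beta}$ are distinct points of $N_{ir_0}$, so $\rho_{ir_0}(q_{i,\alpha},q_{i,\beta})\ge\delta>L(\mu)$, whence by (a) $\rho(q_{i,\alpha},q_{i,\beta})>L(\mu)$, i.e.\ $d(p_{i,\alpha},p_{i,\beta})>e^{-ir_0}D(e^{L(\mu)/2}-1)$; since $v_{i,\alpha},v_{i,\beta}$ sit on $S_{ir_0+\delta}$,
\begin{align}
\rho(v_{i,\alpha},v_{i,\beta})=2\log\Big(1+e^{\delta}\,\tfrac{d(p_{i,\alpha},p_{i,\beta})}{e^{-ir_0}D}\Big)>2\delta+2\log(e^{L(\mu)/2}-1)\ge\tfrac{\delta}{c(\mu)},\nonumber
\end{align}
the last step because $2\delta-\tfrac{\delta}{c(\mu)}\ge\tfrac32\delta>2>-2\log(e^{1/2}-1)\ge-2\log(e^{L(\mu)/2}-1)$ by $L(\mu)\ge1$ and \ref{RR0}; the same estimate in fact gives $\tfrac{2\delta}{c(\mu)}$-separation, which (iv) uses. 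For (ii): given $(p,s)$, pick a level $ir_0$ ($i\in\mathbb{N}$) with $|s-ir_0|\le r_0$ and, by (d), an index $\alpha$ with $\rho_{ir_0}((p,ir_0),q_{i,\alpha})<\delta$; then, by (c), (d) and \ref{RR0},
\begin{align}
\rho((p,s),v_{i,\alpha})\le|s-ir_0|+\rho((p,ir_0),q_{i,\alpha})+\rho(q_{i,\alpha},v_{i,\alpha})<r_0+\delta+\delta<2r_0.\nonumber
\end{align}

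For (iv): by (i) the vertex set is $\delta/c(\mu)$-separated with $\delta/c(\mu)>r_b$ (by \ref{RR0}), hence uniformly locally finite in $\mathcal{H}(Z)$; since a uniformly coarsely proper space carries a countable quasi-lattice (\cite[Theorem~3.D.15]{CorHar}) and a uniformly locally finite set meets each ball of a countable ball-cover in finitely many points, $\Gamma\mathcal{H}(Z)^{(0)}$ is countable. For the valency, $V(v_{i,\alpha})$ has $\rho$-diameter below $2r_0+6\delta<4r_0$ (two of its points differ in height by at most $r_0$ by (c), and their $\pi_{ir_0}$-images lie in one $\rho_{ir_0}$-ball of radius $3\delta$, hence, by (a)'s generalisation, in a $\rho$-ball of radius $3\delta$), while $V(v_{i,\alpha})\cap V(v_{j,\beta})\ne\emptyset$ forces $|i-j|\le1$; taking a point of $V(v_{i,\alpha})\cap V(v_{j,\beta})$ then gives $\rho(v_{i,\alpha},v_{j,\beta})<8r_0<10r_0$. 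Thus every neighbour of a vertex $v$ lies in $B(v,10r_0)$, which by uniform coarse properness is covered by $N(10r_0,\delta/c(\mu))$ balls of radius $\delta/c(\mu)$, and each of these contains at most one vertex by the $\tfrac{2\delta}{c(\mu)}$-separation above; so $\#N(v)\le N(10r_0,\delta/c(\mu))$.

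Part (iii) is where the work is. Essential surjectivity of $\iota$ is (ii). If $u\sim v$ then $\rho(u,v)<10r_0$ as in the valency step, and once $\Gamma\mathcal{H}(Z)$ is connected this yields $\rho(u,v)\le 10r_0\,d_\Gamma(u,v)$; connectedness follows since within a level the subgraph contains every pair with $\rho_{ir_0}(q_{i,\alpha},q_{i,\beta})\le3\delta$, and $(S_{ir_0},\rho_{ir_0})$ is $L(\mu)$-coarsely connected (the proof of Lemma~\ref{help1}(iii) via (a)) with $N_{ir_0}$ being $\delta$-cobounded and $L(\mu)<\delta$ by \ref{RR0}, so level-$i$ net points are linked by chains of $3\delta$-close net points, while by (d) each level-$i$ vertex links to the adjacent levels. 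For the opposite inequality one must bound $d_\Gamma(u,v)$ linearly in $B:=\rho(u,v)$. By (b), $u=v_{i,\alpha}$ and $v=v_{j,\beta}$ have $|i-j|\le B/r_0$, and the formula for $\rho$ forces $d(p_{i,\alpha},p_{j,\beta})\le e^{B/2}e^{-\min(i,j)r_0}D$; marching $u$ through the $\le B/r_0$ intervening levels, choosing at each step by (d) a net point of the next level within $\delta$ of the vertical projection of the current vertex, moves the $Z$-coordinate by at most $D(e^{\delta/2}-1)\sum_k e^{-kr_0}\le C_1 e^{-\min(i,j)r_0}D$ for a constant $C_1$, so after $O(B)$ edges one reaches a vertex of $v$'s level whose $Z$-coordinate is within $(e^{B/2}+C_1)e^{-\min(i,j)r_0}D$ of $p_{j,\beta}$; the two corresponding points of $S_{jr_0}$ are then at $\rho$-distance at most $2\log\!\big(1+(e^{B/2}+C_1)e^{B}\big)$, a bound depending only on $B$ — the exponential factors in the $Z$-estimate cancelling against the $e^{jr_0}$ hidden in the metric. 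This cancellation is the crux: a priori $\rho_r$ can dwarf $\rho$, but $\rho$-proximity at height $\sim r$ is $Z$-proximity at the commensurate scale $\sim e^{-r}$, and it is here that uniform coarse properness is used, to keep the number of within-level subdivision steps bounded, so that the two level-$j$ vertices are joined by $O(B)$ edges; altogether $d_\Gamma(u,v)=O(B)$, and $\iota$ is a quasi-isometry. The main obstacle is exactly this last step — turning $\rho$-control into a genuine bound on the combinatorial distance $d_\Gamma$, against the fact that the level metrics $\rho_r$ are not comparable to $\rho$ on large scales.
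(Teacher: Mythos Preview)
Your arguments for (i), (ii), and (iv) are correct, though they differ from the paper's: for (i) the paper invokes Lemma~\ref{segment} to show that the balls $B(v,\delta/c(\mu))$ around distinct vertices are pairwise \emph{disjoint} (a slightly stronger statement which it then reuses in (iv)), while your direct computation with the explicit formula for $\rho$ also goes through; for (iv) the paper uses a packing argument with these disjoint balls inside $B(v,9r_0+\delta/c(\mu))$, whereas your covering argument together with the $2\delta/c(\mu)$-separation is equally valid.

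The genuine gap is in (iii), at precisely the spot you flag yourself. After marching to level $j$ you have two Cao vertices on $S_{jr_0}$ at $\rho$-distance $O(B)$, and you must connect them by $O(B)$ edges. Your appeal to uniform coarse properness does not do this. Same-level adjacency in $\Gamma\mathcal{H}(Z)$ is governed by $\rho_{jr_0}$, not by $\rho$; covering a $\rho$-ball of radius $M$ by $N(M,\delta/c(\mu))$ small balls gives neither a chain of $\rho_{jr_0}$-close net points nor anything linear in $M$, and there is no mechanism in your outline that turns $\rho$-control on $S_{jr_0}$ into $\rho_{jr_0}$-control. So the ``within-level subdivision'' step is unsubstantiated.

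The paper sidesteps this completely by using the rough-geodesic structure of $\mathcal{H}(Z)$ (Lemma~\ref{help1}(i), resting on Bonk--Schramm), which you never invoke. One takes a $2\mu$-rough geodesic $\gamma\colon u\curvearrowright v$ of length $r=\rho(u,v)$, subdivides it into $m\approx r$ pieces of $\rho$-length $\le L(\mu)$, and for each sample $\gamma(kr/m)$ picks a nearby Cao vertex on the appropriate level. Because consecutive samples are $L(\mu)$-close and $\pi_{i_0r_0}$ is $1$-Lipschitz above level $i_0r_0$ (Lemma~\ref{contraction}), consecutive choices satisfy $d_\Gamma\le 3$, and one reads off $d_\Gamma(u,v)\le 3m<3r_0\,\rho(u,v)$. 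The crucial feature is that the rough geodesic is free to dip \emph{below} level $j$; this is exactly what dissolves the $\rho$ versus $\rho_{jr_0}$ mismatch. Your ``march, then connect within a level'' scheme could in principle be rescued by descending a further $O(B/r_0)$ levels from both endpoints until their projections are $L(\mu)$-close (at which point the corresponding net points are genuinely adjacent), but that is tantamount to reconstructing the rough geodesic by hand, and it is not what ``uniform coarse properness'' provides.
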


\begin{proof}
(i) Suppose $v \in \Gamma \mathcal{H}(Z)^{(0)}$ where $v= \pi_{ir_0+ \delta}(q)$ for $q \in N_{ir_0}$. By Lemma \ref{segment}
\begin{align} B(v,\delta/c(\mu)) &\subseteq A^{\delta/2}_{q} \times [h(q),h(q)+2 \delta], \nonumber  
\end{align} and $$(A^{\delta/2}_{q} \times [h(q), h(q) + 2 \delta]) \cap (A^{\delta/2}_{p} \times [h(p),h(p)+2 \delta]) = \emptyset$$ if $q \in N_{ir_0}$ and $p \in N_{jr_0}$ are distinct points since $N_{ir_0}$ is a maximal $\delta$-net in $(S_{ir_0},\rho_{ir_0})$ and $r_0 > 3 \delta$ by (\ref{RR0}). Hence $\rho(u,v) \geq \delta/c(\mu)$ if $u$ and $v$ are distinct vertices in the Cao graph. The claim now follows. \par (ii) Let $z \in \mathcal{H}(Z)$, $i \in \mathbb{N}$ such that $ir_0 \leq h(z) < (i+1)r_0$, and $q \in N_{ir_0}$ such that $\rho_{ir_0}(\pi_{ir_0}(z), q) \leq \delta$. Now
\begin{align}
&\rho(z, \Gamma \mathcal{H}(Z)^{(0)}) \leq \rho(z, \pi_{ir_0+\delta}(q)) \leq \rho(z,\pi_{ir_0}(z)) + \rho(\pi_{ir_0}(z), \pi_{ir_0+\delta}(q)) \nonumber \\ &\leq r_0 + \rho_{i_0r_0}(\pi_{ir_0}(z), q) +\rho(q,\pi_{ir_0+\delta}(q))  \leq r_0 + \delta + \delta  < 2r_0 \nonumber
\end{align}
since $r_0 > 3 \delta$ by (\ref{RR0}). The claim now follows. \par (iii) By (ii) it suffices to show that the inclusion $(\Gamma \mathcal{H}(Z)^{(0)}, d_\Gamma) \hookrightarrow (\mathcal{H}(Z), \rho)$ is a quasi-isometric embedding. Explicitly, we prove that \begin{align}\dfrac{1}{8r_0} \rho(u, v) \leq d_\Gamma(u, v) < 3r_0 \rho(u, v) \label{qi0} \end{align} for all $u,v \in \Gamma\mathcal{H}(Z)^{(0)}$. We begin by proving the right-hand side of (\ref{qi0}). Let $u,v \in \Gamma\mathcal{H}(Z)^{(0)}$ be distinct vertices, $\gamma \colon [0,r] \rightarrow \mathcal{H}(Z)$ a $2\mu$-rough geodesic $u \curvearrowright v$ where $r=\rho(u,v)$ which exists by Lemma \ref{help1}, and $m \in \mathbb{N}$ such that $m-1 < r \leq m$. Now, $$\rho(\gamma(kr/m),\gamma((k+1)r/m)) \leq r/m + 2\mu \leq 1 + 2\mu = L(\mu)$$ for every $k \in \lbrace 0, \dots, m-1 \rbrace \subseteq \mathbb{N}$. For each $k \in \lbrace 0, \dots, m\rbrace \subseteq \mathbb{N}$ choose $q_{i(k),\alpha(k)} \in N_{i(k)r_0}$ such that $\pi_{i(0)+\delta}(q_{i(0),\alpha(0)})=u$, $\pi_{i(m)+\delta}(q_{i(m),\alpha(m)})=v$, and \begin{align}i(k)r_0 \leq h(\gamma(kr/m)) &< (i(k) + 1)r_0,\nonumber \\ \rho_{i(k)r_0}(q_{i(k),\alpha(k)}, \pi_{i(k)r_0}\gamma(kr/m)) &< \delta \nonumber, \nonumber \end{align} and write $v_{i(k),\alpha(k)}=\pi_{i(k)r_0 + \delta}(q_{i(k),\alpha(k)})$ as usual. Let $i_0 = \min \lbrace i(k), i(k+1) \rbrace$. 
Since the restriction of $\pi_{i_0r_0}$ to $\mathcal{H}(Z) \setminus B_{i_0r_0}$ is $1$-Lipschitz by Lemma \ref{contraction}, $$\rho(\pi_{i_0r_0}\gamma(i(k)r/m), \pi_{i_0r_0}\gamma(i(k+1)r/m)) \leq L(\mu)$$ so $\rho_{i_0r_0}(\pi_{i_0r_0}\gamma(i(k)r/m), \pi_{i_0r_0}\gamma(i(k+1)r/m)) \leq L(\mu).$ Choose $p, q \in N_{i_0r_0}$ such that
\begin{align}
\rho_{i_0r_0}(p, \pi_{i_0r_0}\gamma(i(k)r/m)) &< \delta, \nonumber \\ \rho_{i_0r_0}(q, \pi_{i_0r_0}\gamma(i(k+1)r/m)) &< \delta, \nonumber
\end{align}
and note that by Lemma \ref{contraction} and (\ref{RR0})
\begin{align}
\rho_{ir_0}(p, q) &\leq  \rho_{i_0r_0}(p, \pi_{i_0r_0}\gamma(i(k)r/m)) \nonumber \\ &+ \rho_{i_0r_0}(\pi_{i_0r_0}\gamma(i(k)r/m), \pi_{i_0r_0}\gamma(i(k+1)r/m)) \nonumber \\ &+ \rho_{i_0r_0}(\pi_{i_0r_0}\gamma(i(k+1)r/m), q) \nonumber \\ &< \delta + L(\mu) + \delta < 3 \delta. \nonumber
\end{align} 
Thus $p \in A(q)$ so $\pi_{i_0r_0+\delta}(p) \in V(\pi_{i_0r_0 + \delta}(q))$ and $d_\Gamma(\pi_{i_0r_0+\delta}(p), \pi_{i_0r_0+\delta}(q)) \leq 1$ giving
\begin{align} d_\Gamma(v_{i(k),\alpha(k)}, &v_{i(k+1),\alpha(k+1)}) \leq \nonumber \\  &1 + d_\Gamma(v_{i(k),\alpha(k)}, \pi_{i_0r_0 + \delta}(p)) + d_\Gamma(\pi_{i_0r_0 + \delta}(q), v_{i(k+1),\alpha(k+1)}). \nonumber \end{align}
However, since $\vert i(k)-i(k+1)\vert \leq 1$ \begin{align}\pi_{i(k)r_0}(\gamma(kr/m)) &\in V(v_{i(k), \alpha(k)}) \cap V(\pi_{i_0r_0+\delta}(p)), \nonumber \\ \pi_{i(k+1)r_0}(\gamma(i(k+1)r/m)) &\in V(v_{i(k+1), \alpha(k+1)}) \cap V(\pi_{i_0r_0+\delta}(q)), \nonumber\end{align} so $d_\Gamma(v_{i(k),\alpha(k)}, \pi_{i_0r_0 + \delta}(p)) \leq 1$ and $d_\Gamma(v_{i(k+1),\alpha(k+1)}, \pi_{i_0r_0 + \delta}(q)) \leq 1$, and altogether $$d_\Gamma(v_{i(k),\alpha(k)}, v_{i(k+1),\alpha(k+1)}) \leq 3.$$ Finally \begin{align} d_\Gamma(u,v) &\leq \sum_{k=0}^{m-1}d_\Gamma(v_{i(k), \alpha(k)}, v_{i(k+1), \alpha(k+1)}) \leq 3m \leq 3(r+1) \nonumber \\ &= 3\rho(u,v) +3 \leq 3 (1+ c(\mu)/\delta) \rho(u,v)  < 3r_0 \rho(u,v)  \nonumber \end{align} as $\rho(u,v) \geq \delta/c(\mu)$ by (i) which gives the the right-hand side of (\ref{qi0}). To prove the left-hand side of (\ref{qi0}) let $u,v \in \Gamma \mathcal{H}(Z)^{(0)}$ be two vertices. Without loss of generality, assume that $d_\Gamma(u,v) = n \in \mathbb{N} \setminus \lbrace 0\rbrace$ realised by the edge path $u=x_0, \dots, x_n=v$. Since $x_i \sim x_{i+1}$ it follows that $V(x_i) \cap V(x_{i+1}) \neq \emptyset$ where $\mathrm{diam}(V(x_i)) < 4r_0$ for all $0 \leq i \leq n-1$. Thus $\rho(x_i,x_{i+1}) < 8r_0$ for all $0 \leq i \leq n-1$ and    
\begin{align}
\rho(u,v) \leq \sum_{k=0}^{n-1} \rho(x_i, x_{i+1}) < 8r_0n = 8r_0d_\Gamma(u,v), \nonumber 
\end{align}
which gives the left-hand side of (\ref{qi0}) and the claim follows. \\ \par (iv) For $n \in \mathbb{N}$ let \begin{align} C_n = \left\lbrace B(v, \delta/c(\mu)) \colon v \in \Gamma \mathcal{H}(Z)^{(0)} \textrm{ and } h(v) \leq nr_0 + \delta \right\rbrace \textrm{ so } \Gamma \mathcal{H}(Z)^{(0)} \subseteq \bigcup_{n \in \mathbb{N}} \bigcup C_n. \nonumber \end{align} We claim that $\#C_n < \infty$ for every $n \in \mathbb{N}$ from which the claim then follows. By Lemma \ref{BB} for any $z \in S_0$ and $n \in \mathbb{N}$ $$\bigcup C_n \subseteq B\left(z, nr_0 + {\delta}/{c(\mu)} + \delta + \mathrm{diam}\left(S_{nr_0 + \delta/c(\mu) + \delta} \right) \right) \subseteq B(z, 5(n+2)r_0)$$ using (\ref{RR0}) together with $$\mathrm{diam}(S_{nr_0 + \delta/c(\mu) + \delta}) \leq 2 \log \left( \dfrac{D + e^{-(nr_0+\delta/c(\mu) + \delta)}D}{e^{-(nr_0 + \delta/c(\mu)+\delta)}D}\right) = 2 \log(e^{nr_0 + \delta/c(\mu) + \delta} + 1).$$ Let $R(n) = 5(n+2)r_0$. Since $R(n) > r_b$ by (\ref{RR0}) and $(\mathcal{H}(Z),\rho)$ is uniformly coarsely proper $B(z,R(n))$ is covered by $N(R(n),\delta/c(\mu))$ balls of radius $\delta/c(\mu)$ and $\# C_n \leq N(R(n),\delta/c(\mu))$ by part (i) and it follows that $\Gamma \mathcal{H}(Z)^{(0)}$ is countable. To see that $\Gamma \mathcal{H}(Z)$ has bounded valency note that if $v \sim u$ then $d_\Gamma(v, u) \leq 1$ and by inequality $(\ref{qi0})$ above $\rho(v, u) \leq 8r_0.$ In particular $$B(u, \delta/c(\mu)) \subseteq B(v, 9r_0 + \delta/c(\mu)),$$ and $B(v, \delta/c(\mu)) \cap B(v, \delta/c(\mu)) = \emptyset$ by part (i) if $v$ and $u$ are distinct vertices in the Cao graph. Once again, since $(\mathcal{H}(Z),\rho)$ is uniformly coarse proper $$\# N(v) \leq N(9r_0+\delta/c(\mu),\delta/c(\mu)) \leq N(10r_0,\delta/c(\mu))$$ from which the claim follows.
\end{proof}
 
The following lemma now replaces \cite[Assertion 3.2]{Cao}.
\begin{lemma} \label{technical2}
Let $(Z,d)$ be a bounded uniformly coarsely connected space containing at least two points with uniformly coarsely proper hyperbolic cone $(\mathcal{H}(Z),\rho)$. Then for any $i \in \mathbb{N}$ and any $q \in S_{ir_0}$ $$\# V(i,q) \leq N(10\delta,\delta/c(\mu))$$ where $V(i,q) = \lbrace v_{i,\alpha} \in \Gamma\mathcal{H}(Z)^{(0)} \colon \rho_{ir_0}(q_{i, \alpha}, q) < 4 \delta \rbrace$.     
\end{lemma}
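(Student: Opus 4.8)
This is the local counterpart of the bounded-valency estimate in Proposition~\ref{BIG2}(iv), and I would argue in the same spirit: place every vertex counted by $V(i,q)$ inside a single $\rho$-ball of controlled radius, observe that these vertices are spread out at scale $\delta/c(\mu)$, and then let uniform coarse properness cap their number.

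First I would write $q=(p,ir_0)\in S_{ir_0}$ and set $v=\pi_{ir_0+\delta}(q)=(p,ir_0+\delta)\in\mathcal{H}(Z)$. For any vertex $v_{i,\alpha}=\pi_{ir_0+\delta}(q_{i,\alpha})$ with $\rho_{ir_0}(q_{i,\alpha},q)<4\delta$, the triangle inequality gives
$$\rho(v_{i,\alpha},v)\le\rho(v_{i,\alpha},q_{i,\alpha})+\rho(q_{i,\alpha},q)+\rho(q,v),$$
where $\rho(v_{i,\alpha},q_{i,\alpha})=\rho(q,v)=\delta$ because the vertical ray $\sigma_p$ is a geodesic by Lemma~\ref{help1}(ii), while $\rho(q_{i,\alpha},q)\le\rho_{ir_0}(q_{i,\alpha},q)<4\delta$ since $\rho\le\rho_{ir_0}$ on $\mathcal{H}(Z)\setminus B_{ir_0}$. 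Hence $\rho(v_{i,\alpha},v)<6\delta<10\delta$, so every vertex counted by $V(i,q)$ lies in $B(v,10\delta)$.

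It then remains to bound how many such vertices $B(v,10\delta)$ can hold. Since $\delta\ge c(\mu)=2L(\mu)$ by (\ref{RR0}), Lemma~\ref{segment} applied with $y=q_{i,\alpha}$, $r=ir_0$, $t=\delta$ yields $B(v_{i,\alpha},\delta/c(\mu))\subseteq A^{\delta/2}_{q_{i,\alpha}}\times[ir_0,ir_0+2\delta]$, and these sets are pairwise disjoint over the distinct net points of the maximal $\delta$-net $N_{ir_0}$ in $(S_{ir_0},\rho_{ir_0})$, exactly as in the proof of Proposition~\ref{BIG2}(i); so the balls $B(v_{i,\alpha},\delta/c(\mu))$ are pairwise disjoint, whence no ball of radius $\delta/c(\mu)$ can contain two distinct $v_{i,\alpha}$ (its centre would lie in two disjoint such balls). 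Now $10\delta>\delta/c(\mu)>r_b$ by (\ref{RR0}) (using $c(\mu)\ge2$ and $\delta>c(\mu)(r_b+1)$), so uniform coarse properness of $(\mathcal{H}(Z),\rho)$ covers $B(v,10\delta)$ by $N(10\delta,\delta/c(\mu))$ balls of radius $\delta/c(\mu)$, each meeting $V(i,q)$ in at most one vertex; therefore $\#V(i,q)\le N(10\delta,\delta/c(\mu))$. The only steps needing care are checking that Lemma~\ref{segment} applies (the bound $\delta\ge 2L(\mu)$) and that the radii $10\delta>\delta/c(\mu)>r_b$ are admissible for uniform coarse properness, both immediate from (\ref{RR0}); there is no substantial obstacle.
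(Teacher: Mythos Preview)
Your argument is correct and follows essentially the same route as the paper's: place all $v_{i,\alpha}\in V(i,q)$ inside a single $\rho$-ball of radius $10\delta$, invoke the $\delta/c(\mu)$-separation from Proposition~\ref{BIG2}(i) (via Lemma~\ref{segment}), and count using uniform coarse properness. The only cosmetic difference is that the paper centres the big ball at $q$ and shows the stronger inclusion $B(v_{i,\beta},\delta/c(\mu))\subseteq A(q_{i,\beta})\times[ir_0,ir_0+2\delta]\subseteq B(q,10\delta)$, whereas you centre at $v=\pi_{ir_0+\delta}(q)$ and obtain $v_{i,\alpha}\in B(v,6\delta)$ by a direct triangle inequality; both lead to the same count.
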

\begin{proof}
Suppose $v_{i,\beta} \in V(i,q)$. As $A^{\delta/2}_{q_{i,\beta}} \subseteq A(q_{i,\beta})$ and $\rho \leq \rho_{ir_0}$ $$B(v_{i,\beta}, \delta/c(\mu)) \subseteq A(q_{i,\beta}) \times [ir_0, ir_0 + 2 \delta] \subseteq B(q,10\delta)$$ by Lemma \ref{segment}. By Proposition \ref{BIG2} the balls $B(v_{i,\alpha}, \delta/c(\mu))$ and $B(v_{i,\beta}, \delta/c(\mu))$ are disjoint if $v_{i,\alpha} \neq v_{i,\beta}$. Thus $\# V(i,q) \leq N(10\delta, \delta/c(\mu))$ since $(\mathcal{H}(Z),\rho)$ is uniformly coarsely proper and $\delta/c(\mu) > r_b$.  
\end{proof}
We use this to find a uniform upper bound for the downward flow in the Cao graph.
\begin{proposition} \label{asymmetry1}
Let $(Z,d)$ be a bounded uniformly coarsely connected space containing at least two points with uniformly coarsely proper hyperbolic cone $(\mathcal{H}(Z),\rho)$. Let $v \in \Gamma\mathcal{H}(Z)^{(0)}$ and $ N^-(v) = \lbrace w \in \Gamma \mathcal{H}(Z)^{(0)} \colon w \sim v \textrm{ and } h(w) = h(v) - r_0\rbrace$. Then $$\#N^-(v) \leq N(10\delta, \delta/c(\mu))$$ for all $v \in \Gamma \mathcal{H}(Z)^{(0)}$.
\end{proposition}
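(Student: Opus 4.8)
The plan is to show that all of $N^-(v)$ lies in a single set of the kind bounded in Lemma \ref{technical2}, and then quote that lemma. First I would record that the vertex $v_{i,\alpha} = \pi_{ir_0+\delta}(q_{i,\alpha})$ has height $h(v_{i,\alpha}) = ir_0 + \delta$. Hence, if $v$ lies in the bottom layer, then $h(v) - r_0 < 0$ by (\ref{RR0}) (since $r_0 > 3\delta$), so $N^-(v) = \emptyset$ and there is nothing to prove; otherwise $v = v_{i,\alpha}$ with $i \geq 1$, and the only vertices of height $h(v) - r_0 = (i-1)r_0 + \delta$ are the $v_{i-1,\beta}$ with $\beta \in \mathcal{I}_{i-1}$, so every $w \in N^-(v)$ is of this form. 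I would then fix as reference point $q := \pi_{(i-1)r_0}(q_{i,\alpha}) = (p_{i,\alpha},(i-1)r_0) \in S_{(i-1)r_0}$, the aim being to prove $N^-(v) \subseteq V(i-1,q)$.

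Next, for a given $w = v_{i-1,\beta} \in N^-(v)$ I would unwind the adjacency $w \sim v$, i.e. $V(v) \cap V(w) \neq \emptyset$. Since $V(v_{i,\alpha})$ is supported over the heights $[ir_0,(i+1)r_0]$ and $V(v_{i-1,\beta})$ over $[(i-1)r_0,ir_0]$, a common point must sit at height exactly $ir_0$, and is therefore of the form $(p,ir_0)$ with $(p,ir_0) \in A(q_{i,\alpha})$ and $(p,(i-1)r_0) \in A(q_{i-1,\beta})$; that is, $\rho_{ir_0}(q_{i,\alpha},(p,ir_0)) < 3\delta$ and $\rho_{(i-1)r_0}(q_{i-1,\beta},(p,(i-1)r_0)) < 3\delta$. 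Now $q$ and $(p,(i-1)r_0)$ are precisely the height-$(i-1)r_0$ points of the vertical fibres over $p_{i,\alpha}$ and $p$, so Proposition \ref{hypinequality} with $r = (i-1)r_0$ and $t = r_0$, applied to the first estimate, gives $\rho_{(i-1)r_0}(q,(p,(i-1)r_0)) \leq \kappa(L(\mu))^{-r_0}\cdot 3\delta$, which is $< \delta$ by the scale condition (\ref{RRR0}) (as $N(10\delta,\delta/c(\mu)) \geq 1$). The triangle inequality for $\rho_{(i-1)r_0}$ then yields $\rho_{(i-1)r_0}(q_{i-1,\beta},q) < 3\delta + \delta = 4\delta$, i.e. $w = v_{i-1,\beta} \in V(i-1,q)$. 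As $w$ was arbitrary, $N^-(v) \subseteq V(i-1,q)$, and Lemma \ref{technical2} (at level $i-1$) gives $\#N^-(v) \leq \#V(i-1,q) \leq N(10\delta,\delta/c(\mu))$.

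The step I expect to be delicate is this passage to level $i-1$: one must choose the reference point $q$ so that it is simultaneously the vertical projection of a point of the fibre over $p_{i,\alpha}$ — which is what makes Proposition \ref{hypinequality} available — and so that the contraction factor $\kappa(L(\mu))^{-r_0}$ supplied by (\ref{RRR0}) is strong enough to push the $3\delta$ coming from $A(q_{i,\alpha})$ below $\delta$, leaving exactly the slack the triangle inequality needs to stay inside the $4\delta$ tolerance of Lemma \ref{technical2}. Everything else is bookkeeping with heights and with the definitions of $A(\cdot)$, $V(\cdot)$, and the path metrics $\rho_r$.
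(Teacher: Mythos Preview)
Your proof is correct and follows essentially the same route as the paper's: fix the reference point $q = \pi_{(i-1)r_0}(q_{i,\alpha})$, take a point in $V(v_{i,\alpha}) \cap V(v_{i-1,\beta})$ (necessarily at height $ir_0$), use Proposition \ref{hypinequality} and (\ref{RRR0}) to contract the $3\delta$ estimate down to level $(i-1)r_0$, and then apply the triangle inequality to land inside $V(i-1,q)$ and invoke Lemma \ref{technical2}. The only cosmetic difference is that the paper writes the intersection point as $y \in A(q_{i,\alpha})$ and works with $\pi_{(i-1)r_0}(y)$ rather than naming the $Z$-coordinate $p$ explicitly.
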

\begin{proof}
Fix $v_{i,\alpha} \in \Gamma \mathcal{H}(Z)^{(0)}$. If $i=0$ then $N^-(v_{i,\alpha}) = \emptyset$ so assume $i \geq 1$ and let $v_{j,\beta} \in N^-(v_{i,\alpha})$. Then $j = i - 1$ and $V(v_{i,\alpha}) \cap V(v_{i-1,\beta}) \neq \emptyset$. In particular, there exists $y \in A(q_{i,\alpha})$ such that $\rho_{ir_0}(q_{i,\alpha},y) < 3 \delta$ and
\begin{align}\rho_{(i-1)r_0}(\pi_{(i-1)r_0}(y),\pi_{(i-1)r_0}(q_{i,\alpha})) &\leq \kappa(L(\mu))^{-r_0} \rho_{ir_0}(y,q_{i,\alpha}) < 3 \delta \kappa(L(\mu))^{-r_0} \nonumber \\ &< \delta \nonumber \end{align} 
by Proposition \ref{hypinequality} and (\ref{RRR0}). As $y \in V(v_{i-1,\beta})$
\begin{align}
\rho_{(i-1)r_0}(q_{i-1,\beta},\pi_{(i-1)r_0}(q_{i,\alpha})) &\leq  \rho_{(i-1)r_0}(q_{i-1,\beta},\pi_{(i-1)r_0}(y)) \nonumber \\ &+ \rho_{(i-1)r_0}(\pi_{(i-1)r_0}(y),\pi_{(i-1)r_0}(q_{i,\alpha})) \nonumber \\ &< 3 \delta + \delta = 4 \delta \nonumber
\end{align}
so $q_{i-1,\beta} \in V(i-1, \pi_{(i-1)r_0}(q_{i,\alpha}))$ and so $\#N^-(v_{i,\alpha}) \leq N(10\delta,\delta/c(\mu))$ by Lemma \ref{technical2}.
\end{proof}
The following gives a uniform lower bound for the upward flow in the Cao graph.
\begin{proposition} \label{asymmetry2}
Let $(Z,d)$ be a bounded uniformly coarsely connected space containing at least two points with uniformly coarsely proper hyperbolic cone $(\mathcal{H}(Z),\rho)$. Let $v \in \Gamma\mathcal{H}(Z)^{(0)}$ and $N^+(v) = \lbrace w \in \Gamma \mathcal{H}(Z)^{(0)} \colon w \sim v \textrm{ and } h(w) = h(v) + r_0\rbrace$. Then $$2N(10\delta,\delta/c(\mu)) \leq \#N^+(v)$$ for all $v \in \Gamma \mathcal{H}(Z)^{(0)}$ with $h(v) > \delta$.
\end{proposition}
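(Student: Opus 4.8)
The plan is to fix a vertex $v=v_{i,\alpha}$ with $h(v)=ir_0+\delta>\delta$ — so $i\geq 1$ — and to exhibit strictly more than $2N(10\delta,\delta/c(\mu))$ distinct net points of $N_{(i+1)r_0}$ whose associated vertices all lie in $N^+(v)$. The first ingredient is a sufficient condition: if $q'\in N_{(i+1)r_0}$ satisfies $\rho_{ir_0}(\pi_{ir_0}(q'),q_{i,\alpha})<3\delta$, then $w':=\pi_{(i+1)r_0+\delta}(q')\in N^+(v)$. Indeed $h(w')=h(v)+r_0$ by construction, and the point $q'$ itself witnesses $V(w')\cap V(v)\neq\emptyset$: it lies in $A(q')\times[(i+1)r_0,(i+2)r_0]=V(w')$ trivially, and $\pi_{ir_0}(q')\in B_{\rho_{ir_0}}(q_{i,\alpha},3\delta)\cap S_{ir_0}=A(q_{i,\alpha})$ by hypothesis, so $q'\in A(q_{i,\alpha})\times[ir_0,(i+1)r_0]=V(v)$ as well; moreover $w'\neq v$ since the heights differ by $r_0$.

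The second ingredient is that the cross-sections are rich at every scale. First, $(S_{ir_0},\rho_{ir_0})$ is uniformly coarsely connected: by the argument in the proof of Lemma \ref{help1}(iii), $(S_{ir_0},\rho\vert_{S_{ir_0}})$ is $\varepsilon$-coarsely connected for every $\varepsilon>0$, and any $\rho$-$\varepsilon$-sequence in $S_{ir_0}$ with $\varepsilon\leq L(\mu)$ is automatically a $\rho_{ir_0}$-$\varepsilon$-sequence since $\rho_{ir_0}=\rho$ on pairs at $\rho$-distance $\leq L(\mu)$. Second, the $\rho_{ir_0}$-diameter of $S_{ir_0}$ is at least its $\rho\vert_{S_{ir_0}}$-diameter $2\log(e^{ir_0}+1)>2r_0>6\delta$ for $i\geq 1$, using (\ref{RR0}). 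Consequently, for any $\eta>0$ I walk along a fine chain from $q_{i,\alpha}$ towards a point at $\rho_{ir_0}$-distance $>2\delta$ and, recording a point each time the distance to $q_{i,\alpha}$ first crosses a multiple of $\eta$ (and then keeping every other such point), extract a subset of $B_{\rho_{ir_0}}(q_{i,\alpha},2\delta)$ of cardinality on the order of $\delta/\eta$ whose points are pairwise more than $\eta$ apart in $\rho_{ir_0}$.

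Now I choose $\eta=2\delta\,\kappa(L(\mu))^{-r_0}$ and lift the resulting set to $S_{(i+1)r_0}$ via $(p,ir_0)\mapsto(p,(i+1)r_0)$. By Proposition \ref{hypinequality} this lift multiplies $\rho_r$-distances by at least $\kappa(L(\mu))^{r_0}$, so one obtains a subset of $S_{(i+1)r_0}$ of cardinality at least a fixed positive multiple of $\kappa(L(\mu))^{r_0}$, pairwise more than $2\delta$ apart in $\rho_{(i+1)r_0}$, which still projects into $B_{\rho_{ir_0}}(q_{i,\alpha},2\delta)$ under $\pi_{ir_0}$. For each such lifted point $y$, $\delta$-coboundedness of the maximal $\delta$-net $N_{(i+1)r_0}$ gives a net point $q'$ with $\rho_{(i+1)r_0}(y,q')<\delta$; distinct $y$'s produce distinct $q'$'s because they are more than $2\delta$ apart, and by the contraction half of Proposition \ref{hypinequality}, $\rho_{ir_0}(\pi_{ir_0}(q'),q_{i,\alpha})\leq\kappa(L(\mu))^{-r_0}\delta+2\delta<3\delta$, so the criterion of the first paragraph applies and $\pi_{(i+1)r_0+\delta}(q')\in N^+(v)$. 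Thus $\#N^+(v)$ is bounded below by a fixed positive multiple of $\kappa(L(\mu))^{r_0}$, and invoking (\ref{RRR0}) together with the lower bound $\delta>c(\mu)\geq 2$ coming from (\ref{RR0}) forces $\#N^+(v)\geq 2N(10\delta,\delta/c(\mu))$.

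The main obstacle is the middle step: squeezing a genuinely large, uniformly separated set into the fixed-radius ball $B_{\rho_{ir_0}}(q_{i,\alpha},2\delta)$. This is exactly where uniform coarse connectedness of the cross-sections, rather than mere coarse connectedness, is indispensable — one needs chains of arbitrarily small scale $\eta$ — and one must verify carefully that the walking procedure really yields on the order of $\kappa(L(\mu))^{r_0}$ pairwise $\eta$-separated points in that ball, so that after lifting and the exponential gain of Proposition \ref{hypinequality} the count overtakes $2N(10\delta,\delta/c(\mu))$ by the calibration (\ref{RRR0}).
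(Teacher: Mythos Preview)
Your proposal is correct and follows essentially the same route as the paper: find many well-separated points in a small $\rho_{ir_0}$-ball around $q_{i,\alpha}$ using uniform coarse connectedness of the cross-section, lift them to $S_{(i+1)r_0}$ where Proposition~\ref{hypinequality} forces them to be more than $2\delta$ apart, approximate by net points, and check adjacency to $v$. The only cosmetic differences are that the paper parametrises directly by the target count $m=2N(10\delta,\delta/c(\mu))$ (choosing points $x_{k/m}$ at $\rho$-distance $\approx k/m$ from $q_{i,\alpha}$ and separating them via the reverse triangle inequality) rather than by a scale $\eta$, and uses the lifted point $y_k$ rather than the net point $q'$ as the witness for $V(w')\cap V(v)\neq\emptyset$.
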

\begin{proof}
Let $v_{i,\alpha} \in \Gamma \mathcal{H}(Z)^{(0)}$ such that $\delta < h(v_{i,\alpha}) = ir_0+ \delta$. Now $i \geq 1$ and \begin{align} \mathrm{diam}_{\rho_{ir_0}}(S_{ir_0}) &\geq \mathrm{diam}_\rho (S_{ir_0}) \geq 2 \log \left( \dfrac{D + e^{-ir_0}D}{e^{-ir_0}D}\right) = 2 \log \left( e^{ir_0} + 1\right) \geq 2ir_0  \nonumber \\ &\geq 2r_0 > 6\delta \nonumber\end{align}
by (\ref{RR0}). Fix $m = 2N(10 \delta,\delta/c(\mu))$ and let $k \in \lbrace 0, \dots, m\rbrace \subseteq \mathbb{N}$. Now, for all $0 \leq k/m \leq 1$ there exist $x_{k/m} \in S_{ir_0}$ such that $$k/m-\varepsilon \leq \rho(q_{i,\alpha}, x_{k/m}) \leq k/m + \varepsilon$$ for any $0<\varepsilon < 1/(4m)$ since $(S_{ir_0}, \rho \vert_{S_{ir_0}})$ is uniformly coarsely connected. In particular if ${k_1/m} \neq k_2/ m$, say $k_1 > k_2$, then 
\begin{align}
&\rho_{ir_0}(x_{k_1/m},x_{k_2/m}) \geq \rho(x_{k_1/m},x_{k_2/m}) \geq \rho(x_{k_1/m},q_{i,\alpha}) - \rho(q_{i,\alpha}, x_{k_2/m}) \nonumber \\ &\geq k_1/m- \varepsilon -(k_2/m + \varepsilon) \geq (k_1 - k_2)/m - 2 \varepsilon \geq 1/m - 2 \varepsilon > 1/(2m) \nonumber 
\end{align}
as $0<\varepsilon< 1/(4m)$. For each $k \in \lbrace 0, \dots m\rbrace$ let $y_k = \pi_{(i+1)r_0}(x_{k/m})$. As previously for ${k_1} > {k_2}$ \begin{align}\rho_{(i+1)r_0}(y_{k_1}, y_{k_2}) &= \rho_{(i+1)r_0} (\pi_{(i+1)r_0}(x_{k_1/m}), \pi_{(i+1)r_0}(x_{k_2/m})) \nonumber \\ &\geq \kappa(L(\mu))^{r_0}\rho_{ir_0}(x_{k_1/m}, x_{k_2/m}) \geq \dfrac{\kappa(L(\mu))^{r_0}}{2m} \nonumber \\ &= \dfrac{\kappa(L(\mu))^{r_0}}{4N(10\delta, \delta/ c(\mu))} > 2 \delta \nonumber \end{align}
by Lemma \ref{hypinequality} and (\ref{RRR0}). Now for each $y_k$ choose $q_{k} \in N_{(i+1)r_0}$ such that $$\rho_{(i+1)r_0}(y_k, q_k) < \delta$$ noting that $\pi_{(i+1)r_0 + \delta}(q_k) \sim v_{i,\alpha}$ since $y_k \in V(\pi_{(i+1)r_0 + \delta}(q_k)) \cap V(v_{i,\alpha})$. Moreover, $\pi_{(i+1)r_0 + \delta}(q_{k_1}) \neq \pi_{(i+1)r_0 + \delta}(q_{k_2})$ whenever ${k_1} > {k_2}$ since
\begin{align}
\rho_{(i+1)r_0}(q_{k_1}, q_{k_2}) &\geq \rho_{(i+1)r_0}(y_{k_1}, y_{k_2})- \rho_{(i+1)r_0}(y_{k_1}, q_{k_1}) \nonumber \\ &- \rho_{(i+1)r_0} (q_{k_2}, y_{k_2}) > 2 \delta - 2 \delta = 0. \nonumber
\end{align}
Thus $\lbrace 0, \dots m\rbrace \rightarrow N^+(v_{i,\alpha})$ for $k \mapsto \pi_{(i+1)r_0 + \delta}(q_k)$ is an injection and $\# N^+(v_{i,\alpha}) \geq 2N(10\delta, \delta/c(\mu))$ for $i \geq 1$ proving the claim.
\end{proof}

\subsection{Non-amenability of the hyperbolic cone}
Let $(\mathcal{H}(Z),\rho)$ be uniformly coarsely proper, let $\mathbb{R}^{\Gamma \mathcal{H}(Z)^{(0)}} = \lbrace f \colon \Gamma\mathcal{H}(Z)^{(0)} \rightarrow \mathbb{R} \rbrace$, and let $\Delta \colon \mathbb{R}^{\Gamma \mathcal{H}(Z)^{(0)}} \rightarrow \mathbb{R}^{\Gamma \mathcal{H}(Z)^{(0)}}$ be the \emph{graph Laplacian} given by $$\Delta f(v) = \dfrac{1}{\# N(v)} \left(\sum_{w \sim v} f(w)\right) - f(v).$$  \begin{lemma} \label{finallemma}
Let $(Z,d)$ be a bounded uniformly coarsely connected space containing at least two points with uniformly coarsely proper hyperbolic cone $(\mathcal{H}(Z),\rho)$. If there exist a Lipschitz function $f \colon \Gamma\mathcal{H}(Z)^{(0)} \rightarrow \mathbb{R}$ and $C> 0$ such that $\Delta f(v) > C $ for every $v \in \Gamma \mathcal{H}(Z)^{(0)}$ then $(\mathcal{H}(Z), \rho)$ is non-amenable. 
\end{lemma}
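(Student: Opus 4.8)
The plan is to use the vertex set of the Cao graph as the witnessing quasi-lattice and to turn the pointwise bound $\Delta f > C$ into a combinatorial isoperimetric inequality by summation by parts, then transport that inequality to $(\mathcal{H}(Z),\rho)$ using that $d_\Gamma$ and $\rho$ are bi-Lipschitz on $\Gamma\mathcal{H}(Z)^{(0)}$ by $(\ref{qi0})$. First I would record that $\Gamma := \Gamma\mathcal{H}(Z)^{(0)}$ really is a quasi-lattice in $(\mathcal{H}(Z),\rho)$: it is $2r_0$-cobounded by Proposition \ref{BIG2}(ii), and it is uniformly locally finite since it is $\delta/c(\mu)$-separated by Proposition \ref{BIG2}(i) inside a uniformly coarsely proper space, so for $s>r_b$ any $B(x,s)$ meets at most $N(s,\delta/c(\mu))$ of its points (with the trivial adjustment for $s\le r_b$). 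I would also note $\#N(v)\ge 1$ for every $v$: this is forced by the hypothesis, since otherwise $\Delta f(v)$ is undefined — and in any case $(\ref{qi0})$ gives $d_\Gamma(u,v)<3r_0\,\rho(u,v)<\infty$, so the Cao graph is connected with at least two vertices.

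Next, fix a finite nonempty $F\subseteq\Gamma$; let $\lambda>0$ satisfy $|f(u)-f(v)|\le\lambda$ whenever $u\sim v$ (by $(\ref{qi0})$ this holds whether ``Lipschitz'' is read with respect to $d_\Gamma$ or to $\rho$, only the value of $\lambda$ changing), and let $c=N(10r_0,\delta/c(\mu))$ bound the valency by Proposition \ref{BIG2}(iv). Using $\#N(v)\,\Delta f(v)=\sum_{w\sim v}\bigl(f(w)-f(v)\bigr)$ and summing over $v\in F$, the contributions of edges with both endpoints in $F$ cancel, because $\{(v,w):v,w\in F,\ v\sim w\}$ is symmetric while $f(w)-f(v)$ is antisymmetric; this leaves
\[
C\,\#F \;<\; \sum_{v\in F}\#N(v)\,\Delta f(v) \;=\; \sum_{\substack{v\in F,\ w\sim v\\ w\notin F}}\bigl(f(w)-f(v)\bigr),
\]
the strict lower bound using $\#N(v)\ge 1$ and $\Delta f(v)>C>0$. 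Bounding the right-hand side in absolute value by $\lambda$ times the number of such ordered pairs $(v,w)$, and that number by $c$ times $\#\{v\in F:\exists\,w\sim v,\ w\notin F\}$, I obtain $C\,\#F < \lambda c\,\#\{v\in F:\exists\,w\sim v,\ w\notin F\}$.

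Finally I would identify this inner vertex boundary with part of a metric boundary: if $v\in F$ has a neighbour $w\notin F$, then $\rho(v,w)\le 8r_0\,d_\Gamma(v,w)=8r_0$ by $(\ref{qi0})$, so $d(v,F)=0<9r_0$ and $d(v,\Gamma\setminus F)\le 8r_0<9r_0$, i.e. $v\in\partial_{9r_0}F$. Hence $\#F<(\lambda c/C)\,\#\partial_{9r_0}F$, and since the case $F=\emptyset$ is trivial, $\Gamma$ together with the constant $\lambda c/C$ and radius $9r_0$ witnesses that $(\mathcal{H}(Z),\rho)$ is non-amenable. I do not expect a serious obstacle here; the only points needing care are the cancellation over interior edges (clean, by the symmetry just noted) and keeping track of which metric the Lipschitz condition refers to, which $(\ref{qi0})$ renders harmless.
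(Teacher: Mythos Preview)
Your argument is correct, but it proceeds along a different line from the paper. The paper's proof is two sentences: it observes that Proposition~\ref{BIG2} places $\Gamma\mathcal{H}(Z)$ in the scope of \cite[Proposition~2.3]{Cao}, deducing that the Cheeger constant of the Cao graph is positive (equivalently $(\Gamma\mathcal{H}(Z),d_\Gamma)$ is non-amenable), and then transports this to $(\mathcal{H}(Z),\rho)$ via the quasi-isometry of Proposition~\ref{BIG2}(iii) and the quasi-isometry invariance of non-amenability \cite{ABW}. You instead unpack what is essentially the content of Cao's proposition: the summation-by-parts identity $\sum_{v\in F}\#N(v)\,\Delta f(v)=\sum_{v\in F,\,w\sim v,\,w\notin F}(f(w)-f(v))$ is exactly the mechanism that turns a uniform lower bound on $\Delta f$ into a combinatorial isoperimetric inequality, and by using $(\ref{qi0})$ to compare $d_\Gamma$ and $\rho$ directly on $\Gamma\mathcal{H}(Z)^{(0)}$ you bypass the need to cite either \cite[Proposition~2.3]{Cao} or the Block--Weinberger invariance result. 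The trade-off is the usual one: the paper's route is shorter and modular, while yours is self-contained, makes the constants $(\lambda c/C,\,9r_0)$ explicit, and avoids external black boxes.
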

\begin{proof}
By Proposition \ref{BIG2} the assumptions in \cite[Proposition 2.3]{Cao} hold so the Cheeger constant of $\Gamma \mathcal{H}(Z)$ is strictly positive, equivalently, $(\Gamma \mathcal{H}(Z),d_{\Gamma})$ is non-amenable. The claim follows as $(\mathcal{H}(Z), \rho)$ and $(\Gamma \mathcal{H}(Z),d_{\Gamma})$ are quasi-isometric. 
\end{proof}
\begin{main} \label{BIGTHEOREM1}
Let $(\mathcal{H}(Z),\rho)$ be the hyperbolic cone over a bounded space $(Z,d)$. If $(\mathcal{H}(Z), \rho)$ is uniformly coarsely proper and $(Z,d)$ consists of a finite union of uniformly coarsely connected components each containing at least two points then $(\mathcal{H}(Z), \rho)$ is non-amenable. 
\end{main}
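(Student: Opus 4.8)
The plan is to reduce Theorem~A to the case already prepared for above, in which $(Z,d)$ is itself uniformly coarsely connected, and then to patch the finitely many components together. For the connected case, assume $(Z,d)$ is uniformly coarsely connected with at least two points, so Propositions~\ref{asymmetry1} and~\ref{asymmetry2} apply. By Lemma~\ref{finallemma} it is enough to exhibit a Lipschitz $f\colon\Gamma\mathcal{H}(Z)^{(0)}\to\mathbb{R}$ and $C>0$ with $\Delta f(v)>C$ for every vertex $v$, and I would take $f=h$. Since $u\sim v$ means $V(u)\cap V(v)\neq\emptyset$, the levels of $u$ and $v$ differ by at most one, so $f$ is $r_0$-Lipschitz on $(\Gamma\mathcal{H}(Z),d_\Gamma)$; splitting $N(v)=N^{-}(v)\sqcup N^{0}(v)\sqcup N^{+}(v)$ according to whether a neighbour sits one level below, at the level of, or one level above $v$, one obtains
\[ \Delta f(v)=\frac{r_0}{\#N(v)}\bigl(\#N^{+}(v)-\#N^{-}(v)\bigr). \]
When $h(v)>\delta$, Proposition~\ref{asymmetry1} gives $\#N^{-}(v)\le N(10\delta,\delta/c(\mu))$ and Proposition~\ref{asymmetry2} gives $\#N^{+}(v)\ge 2N(10\delta,\delta/c(\mu))$, so $\Delta f(v)\ge r_0\,N(10\delta,\delta/c(\mu))/N(10r_0,\delta/c(\mu))>0$ by the valency bound of Proposition~\ref{BIG2}(iv); for a bottom-level vertex, $h(v)=\delta$, one has $\#N^{-}(v)=0$ while $\delta$-coboundedness of the net $N_{r_0}$ provides at least one neighbour one level up, so again $\Delta f(v)>0$. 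Taking $C$ to be the smaller of these lower bounds and applying Lemma~\ref{finallemma} disposes of this case.

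For the general case, write $Z=Z_1\cup\dots\cup Z_k$ for the (pairwise disjoint) uniformly coarsely connected components, each with at least two points, so that $D_j\coloneqq\mathrm{diam}(Z_j)>0$, and set $\mathcal{H}_j=Z_j\times[0,\infty)\subseteq\mathcal{H}(Z)$ with the metric induced by $\rho$. A direct estimate on the defining formula shows that on $\mathcal{H}_j$
\[ \rho_{Z_j}+2\log(D_j/D)\le\rho\le\rho_{Z_j}, \]
where $\rho_{Z_j}$ is the hyperbolic cone metric of $(Z_j,d)$; since $2\log(D_j/D)\le 0$, the identity map is a quasi-isometry (indeed a rough similarity) from $(\mathcal{H}_j,\rho)$ onto the hyperbolic cone $\mathcal{H}(Z_j)$. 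A subspace of a uniformly coarsely proper space is uniformly coarsely proper — cover a ball by small balls and re-centre each at a point of the subspace, doubling the radii — and uniform coarse properness is a quasi-isometry invariant by Lemma~\ref{growthlargescale}; hence each $\mathcal{H}(Z_j)$ is a uniformly coarsely proper hyperbolic cone over the uniformly coarsely connected space $Z_j$, so is non-amenable by the connected case, and therefore $(\mathcal{H}_j,\rho)$ is non-amenable because non-amenability is a quasi-isometry invariant.

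It remains to glue. For each $j$ choose a quasi-lattice $\Gamma_j\subseteq\mathcal{H}_j$ and constants $C_j,r_j>0$ witnessing non-amenability of $\mathcal{H}_j$. As the $Z_j$, hence the $\mathcal{H}_j$, are disjoint, $\Gamma\coloneqq\Gamma_1\cup\dots\cup\Gamma_k$ is a disjoint union, and it is a quasi-lattice in $\mathcal{H}(Z)$: it is cobounded because the $\mathcal{H}_j$ exhaust $\mathcal{H}(Z)$, and uniformly locally finite because a $\rho$-ball of radius $s$ meets each $\mathcal{H}_j$ inside a $\rho$-ball of radius $2s$ of $\mathcal{H}_j$ and there are only finitely many components. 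Given a finite $F\subseteq\Gamma$, write $F=F_1\sqcup\dots\sqcup F_k$ with $F_j\subseteq\Gamma_j$; since $\Gamma_j\setminus F_j\subseteq\Gamma\setminus F$, the boundary of $F_j$ taken inside $\Gamma_j$ at scale $r_j$ is contained in $\partial_rF$ for $r=\max_j r_j$, and these sets are disjoint over $j$, so $\#F=\sum_j\#F_j\le\sum_j C_j\,\#\partial_{r_j}F_j\le(\max_j C_j)\,\#\partial_rF$. Thus $(\mathcal{H}(Z),\rho)$ is non-amenable.

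The step I expect to be most delicate is the reduction: the cone metric on $\mathcal{H}(Z)$ is built from $\mathrm{diam}(Z)$, not $\mathrm{diam}(Z_j)$, so $\mathcal{H}_j$ is only rough-similar to $\mathcal{H}(Z_j)$ rather than equal to it; and the disjointness of the components has to be used both to recognise $\Gamma$ as a uniformly locally finite disjoint union and to obtain $\Gamma_j\setminus F_j\subseteq\Gamma\setminus F$, so that the component boundaries add up instead of interfering.
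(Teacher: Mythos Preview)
Your proof is correct and follows essentially the same strategy as the paper: in the connected case you take $f=h$, verify it is $r_0$-Lipschitz on the Cao graph, and combine Propositions~\ref{asymmetry1}, \ref{asymmetry2} and the valency bound from Proposition~\ref{BIG2}(iv) to feed Lemma~\ref{finallemma}; for the general case you split into components and add up the isoperimetric inequalities. One minor difference worth noting: the paper handles the decomposition step more tersely, simply asserting that ``by the first part of the proof each $(\mathcal{H}(Z_i),\rho\vert_{\mathcal{H}(Z_i)})$ is non-amenable'' and leaving implicit why the restricted subspace is again of the form to which the connected case applies. Your observation that the identity $(\mathcal{H}_j,\rho)\to(\mathcal{H}(Z_j),\rho_{Z_j})$ is a rough isometry (via the bound $\rho_{Z_j}+2\log(D_j/D)\le\rho\le\rho_{Z_j}$), together with the quasi-isometry invariance of uniform coarse properness and non-amenability, makes this step explicit and is a genuine clarification. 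Your treatment of the bottom level $i=0$ (checking that $\#N^{+}(v)\ge 1$ via coboundedness of $N_{r_0}$) is likewise slightly more careful than the paper's one-line ``the same lower bound holds''.
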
 
\begin{proof}
First suppose $(Z,d)$ is uniformly coarsely connected and contains at least two points. Since $\vert h(v_{i,\alpha})-h(v_{j,\beta})\vert = \vert ir_0 + \delta - jr_0 - \delta \vert \leq \vert i-j\vert r_0 \leq r_0 $ whenever $v_{i,\alpha} \sim v_{j,\beta}$ it follows that $h$ is $r_0$-Lipschitz on $\Gamma \mathcal{H}(Z)^{(0)}$. Moreover, if $i \geq 1$ 
\begin{align} 
\dfrac{\Delta h(v_{i,\alpha})}{r_0} &= \dfrac{1}{\# N(v_{i,\alpha})r_0}\sum_{v_{j,\beta} \sim v_{j,\alpha}} (h(v_{j,\beta}) - h(v_{i,\alpha})) = \dfrac{\#N^+(v_{i,\alpha}) - \#N^-(v_{i,\alpha})}{\# N(v_{i,\alpha})} \nonumber \\ &\geq \dfrac{2N(10\delta,\delta/c(\mu)) - N(10\delta,\delta/c(\mu))}{N(10r_0,\delta/c(\mu))} \geq \dfrac{1}{N(10r_0,\delta/c(\mu))} > 0 \nonumber 
\end{align}
by Lemma \ref{asymmetry1} and Lemma \ref{asymmetry2} where $\# N(v_{i,\alpha}) \leq N(10r_0, \delta/c(\mu))$ for all $v_{i,\alpha} \in \Gamma \mathcal{H}(Z)^{(0)}$ by Lemma \ref{BIG2}. If $i=0$ we have $N^-(v_{i,\alpha}) = \emptyset$ and the same lower bound holds for $\Delta h$. Thus $(\mathcal{H}(Z),\rho)$ is non-amenable by Lemma \ref{finallemma}. \par
Now suppose that $(Z,d)$ is a finite union of uniformly coarsely connected components $Z=Z_1 \sqcup \dots \sqcup Z_n$ where each component $Z_i$ contains at least two points. To see that $(\mathcal{H}(Z),\rho)$ is non-amenable let $\Gamma = \Gamma_1 \sqcup \dots \sqcup \Gamma_n \subseteq \mathcal{H}(Z)$ be a quasi-lattice in $(\mathcal{H}(Z),\rho)$ such that $\Gamma_i \subseteq \mathcal{H}(Z_i)$ is a quasi-lattice in $(\mathcal{H}(Z_i), \rho \vert_{\mathcal{H}(Z_i)})$, and let $F \subseteq \Gamma$ be any finite set and write $F_i = F \cap \mathcal{H}(Z_i)$ so that $F = F_1 \sqcup \dots \sqcup F_n$. By the first part of the proof each $(\mathcal{H}(Z_i),\rho \vert_{\mathcal{H}(Z_i)})$ is non-amenable, so for some constants $C_i >0$ and $r_i>0$ the isoperimetric inequality $\# F_i \leq C_i \# \partial_{r_i} F_i$ holds and hence  
\begin{align}
\# F = \#F_1 + \dots + \#F_n \leq C_1 \# \partial_{r_1} F_1 + \dots C_n \# \partial_{r_n} F_n \leq C \# \partial_{r} F \nonumber  
\end{align} for $C = \max \lbrace C_1, \dots, C_n\rbrace$ and $r = \max\lbrace r_1, \dots, r_n\rbrace$. The claim now follows. 
\end{proof}

\end{document}